\documentclass[aps,pre,twocolumn,floatfix]{revtex4}

\usepackage[normalem]{ulem}
\usepackage{latexsym}
\usepackage[usenames]{color}
\usepackage{amsthm}
\usepackage{hyperref}
\usepackage{braket}

\usepackage{amsmath}    % need for subequations
\usepackage{amsfonts}  %note how statements can be commented out
\usepackage{amssymb}
\usepackage{graphicx}
\usepackage{slashed}
\usepackage{fouridx}
\usepackage{tikz}
\usetikzlibrary{calc}
\topmargin 0.0cm
\oddsidemargin 0.0cm
\textwidth 17cm 
\textheight 22cm
\footskip 1cm

\linespread{1.0}

\theoremstyle{plain} 
\theoremstyle{plain} 
\theoremstyle{plain} \newtheorem{theor}{Theorem}[section] 
\theoremstyle{plain} 
\theoremstyle{plain}  
\theoremstyle{remark} 
\theoremstyle{plain} 
\theoremstyle{remark}

\newcommand\CROSS[1]{%
  \hbox{%
    \vbox{
      \hrule
      \kern2.5pt
      \hbox{$#1$\,\strut}
    }%
  \vrule
  }\mskip\thickmuskip
}

\tikzset{
solid node/.style={circle,draw,inner sep=1.5,fill=black},
hollow node/.style={circle,draw,inner sep=1.5}
}

\newlength{\arrowsize}  
\pgfarrowsdeclare{biggertip}{biggertip}{  
  \setlength{\arrowsize}{0.5pt}  
  \addtolength{\arrowsize}{.5\pgflinewidth}  
  \pgfarrowsrightextend{0}  
  \pgfarrowsleftextend{-5\arrowsize}  
}{  
  \setlength{\arrowsize}{0.5pt}  
  \addtolength{\arrowsize}{.5\pgflinewidth}  
  \pgfpathmoveto{\pgfpoint{-5\arrowsize}{4\arrowsize}}  
  \pgfpathlineto{\pgfpointorigin}  
  \pgfpathlineto{\pgfpoint{-5\arrowsize}{-4\arrowsize}}  
  \pgfusepathqstroke  
}

\begin{document}
\title{A constructive theory of shape}

\author{Vladimir Garc\'{\i}a-Morales}

\affiliation{Departament de F\'{\i}sica de la Terra i Termodin\`amica, Universitat de Val\`encia, E-46100 Burjassot, Spain}
\email{garmovla@uv.es}

\begin{abstract}
\noindent We formulate a theory of shape valid for objects of arbitrary dimension whose contours are path connected. We apply this theory to the design and modeling of viable trajectories of complex dynamical systems. Infinite families of qualitatively similar shapes are constructed giving as input a finite ordered set of characteristic points (landmarks) and the value of a continuous parameter $\kappa \in (0,\infty)$.  We prove that all shapes belonging to the same family are located within the convex hull of the landmarks. The theory is constructive in the sense that it provides a systematic means to build a mathematical model for any shape taken from the physical world. We illustrate this with a variety of examples: (chaotic) time series, plane curves, space filling curves, knots and strange attractors. ~\\

\noindent \emph{Keywords:} viable evolutions; parametric equations; space-filling curves; knots; time series
 \end{abstract}
\maketitle

\section{Introduction}

A fact that has captivated the human mind since ancient times is the diversity of shapes found in nature \cite{Ball}. It is, however, only in the last half-century that mathematical theories have been developed to study shape more rigorously and systematically, from topological \cite{Borsuk, Mardesic} and statistical \cite{Kendall1, Kendall2, Dryden} points of view. The great scientific interest of shape is that it is a unifying concept \cite{Leyton}. Besides being a property of all living systems \cite{Thompson}, shape is of crucial importance in architecture \cite{Alexander} and art \cite{Leyton, Leypain, Leyarch}. Physical explanations for the origin of shape in non-equilibrium systems have been proposed \cite{Bejan} that are of great interest in engineering and optimization problems.  Quantitative analysis of the shapes of biological organs is often necessary in agronomy, medicine, genetics, ecology, and taxonomy \cite{Bookstein, Iwata2, Costa}. 

Shapes and images are essentially sets \cite{Aubin3}. Therefore, the analysis, processing, evolution, regulation and control of shape require the consideration of set-valued maps. Viability theory \cite{Aubin1, Aubin2} and the study and reconstruction of invariant sets \cite{Blanchini1, Blanchini2, Nagumo,Takens} provide pathways to address this problem. Here, we present an alternative approach linking it to a nonlinear dynamical systems perspective. Although our theory is general in scope, it is able to provide specific mathematical models of shapes and their viable trajectories. We say that a trajectory is viable in a set $S$ if, for an initial condition in $S$, the trajectory remains in $S$ in any future time.

Spline approximation, Fourier analysis \cite{Kuhl} and wavelet transforms  \cite{Barache, Osowski} are popular methods for analyzing shape. The two general approaches to shape are: region-based (the region in the image corresponding to the analyzed object is considered); and boundary-based (shape is characterized in terms of its silhouette) \cite{Barache, Pavlidis}. Although we shall focus in this article in the second approach, representing shape as a parametric contour and endowing this representation with `nice' properties, the first approach is also possible with help of the concept of $\kappa$-families of scenes introduced in this article. Besides generality of the method, a most desirable property that is sought is \emph{ease}: once the contour of the shape is traced and a \emph{finite} sequence of points in this contour (the landmarks) is selected, a mathematical model for the specific shape and for an infinite family of qualitatively related shapes can be immediately written down. There is no need of calculating any coefficients. This requirement is, clearly, not satisfied by any of the above methods. It is this property of the theory here presented what makes it \emph{constructive}. The theory is intended to be used as a mathematical tool for scientists to provide quick and handy mathematical models of complex shapes. The theory is also parsimonious because a minimal amount of information suffices to generate an infinite family of qualitatively related shapes. Most remarkably, all shapes belonging to the family are located within the set constituted by the convex hull of the landmarks. Thus, if the landmarks are meant to be points on the trajectory of a complex dynamical system (possibly subject to perturbations) an infinite number of viable trajectories for the dynamical system can be obtained, all contained in the convex hull of the landmarks.

Our theory of shape is based in nonlinear $\mathcal{B}_{\kappa}$-embeddings, a mathematical structure that has been recently introduced by the author \cite{homotopon, JPHYSCOMPLEX} and applied to the problem of finding all the roots of a polynomial in the complex plane \cite{homotopon}. The theory is able to represent any path-connected shape and to resolve details of the shape at different scales as a continuous smoothing parameter $\kappa$ is varied. This work is motivated by the following general problem, to which we give a general solution: \emph{For a given finite sequence of data (points) in $\mathbb{R}^{n}$ find a mathematical model for the shape of the data that: a) can fit the given data to any degree of accuracy; b) the interpolation between the data is a non-oscillating, non-piecewise, infinitely differentiable function of its variable (`time') argument  $t$; c)  relates each individual shape to closely related shapes (deformations) obtained by varying the continuous parameter $\kappa$; d) places any $\kappa$-deformed shape within the convex hull of the original data}.

The outline of this article is as follows. In Section \ref{theory} we present our definitions of shape and $\kappa$-family of shapes preceded by all the concepts necessary to understand the definitions. Nonlinear embeddings are introduced and the properties of interest here are worked out to make the theory self-contained. In Section \ref{convex} we prove that the $\kappa$-family of shapes is contained within the convex hull of the landmarks. Then, in Section \ref{dynapp} this general result is applied to different classes of dynamical systems and it is established how viable trajectories can be constructed for them. In Section \ref{examples} we give examples of application of the theory to curves in 1D, periodic shapes (waveforms), curves in 2D (including fractal space-filling curves \cite{Hilbert}), curves in 3D (e.g. knots \cite{Kaufman} and strange attractors). The impatient reader can directly jump to this section going back to Section \ref{theory} when needed. In Section \ref{conclusions} we present some conclusions and discussion of the theory and sketch some directions for possible future work.

\section{Nonlinear embeddings and the definition of shape} \label{theory}

We consider $N$ points in $D$-dimensional Euclidean space extracted from a contour. We shall call these points \emph{characteristic points} or \emph{landmarks} of the shape. We choose a labeling for these landmarks so that they are ordered one after the other in succession starting with point $0$ up to point $N-1$. These landmarks can be obtained directly from the boundary by judicious, intuitive or automated choice. Neither a chain code \cite{Freeman, Bribiesca} nor  tracing the whole contour is needed (as it is the case with Fourier methods as \cite{Kuhl}).  The labeling of the landmarks is important because, as we shall see, it is related to time. In the case of closed contours any cyclic permutation of the landmarks is equivalent. The points are thus, specified by vectors $\mathbf{r}_0, \mathbf{r}_1,\ldots, \mathbf{r}_{N-1}$. By introducing a Kronecker delta convolution, we can rewrite the $n$-th point ($0\le n \le N-1$) as
\begin{equation}
\mathbf{r}_{n}=\sum_{j=0}^{N-1}\mathbf{r}_{j}\delta_{jn}
\end{equation}
where $\delta_{nj}=1$ if $n=j$ and $\delta_{nj}=0$ otherwise, is the Kronecker delta. We note that, indeed, we can write, equivalently,
\begin{equation}
\mathbf{r}_{n}=\frac{\sum_{j=0}^{N-1}\mathbf{r}_{j}\delta_{jn}}{\sum_{j=0}^{N-1}\delta_{jn}}, \label{conv1}
\end{equation}
since $\sum_{j=0}^{N-1}\delta_{jn}=1$ because $0\le n \le N-1$. The construction of a nonlinear embedding begins by noting that
the Kronecker delta admits the following simple representation \cite{JPHYSCOMPLEX}
\begin{equation}
\delta_{nj}=\mathcal{B}\left(n-j,\frac{1}{2}\right) \label{d2}
\end{equation} 
where 
\begin{eqnarray}
\mathcal{B}(x,y)&\equiv&\frac{1}{2}\left(\frac{x+y}{|x+y|}-\frac{x-y}{|x-y|}\right) \nonumber \\
&=&{\begin{cases} \text{sign}(y)&{\text{if }}|x| < |y|\\ \text{sign}(y)/2 &{\text{if }}|x|=|y|, y\ne 0 \\0& {\text{otherwise}} \end{cases}} \label{d1}
\end{eqnarray} 
is the $\mathcal{B}$-function \cite{VGM1, VGM2, JPHYSCOMPLEX}, with 
$x, y \in \mathbb{R}$. We can thus use Eq. (\ref{d2}) to replace the Kronecker deltas entering in Eq. (\ref{conv1}). We obtain
 \begin{equation}
\mathbf{r}_{n}=\frac{\sum_{j=0}^{N-1}\mathbf{r}_{j}\mathcal{B}\left(n-j,\frac{1}{2}\right)}{\sum_{j=0}^{N-1}\mathcal{B}\left(n-j,\frac{1}{2}\right)}, \label{conv2}
\end{equation}
We can now `fuzzify' this expression by means of the approach sketched in \cite{fuzzypap, JPHYSCOMPLEX}. First, we replace the discrete variable $n \in \mathbb{Z}$ by a continuous time variable $t \in \mathbb{R}$. We then define
\begin{equation}
\mathbf{r}(t):=\frac{\sum_{j=0}^{N-1}\mathbf{r}_{j}\mathcal{B}\left(t-j,\frac{1}{2}\right)}{\sum_{j=0}^{N-1}\mathcal{B}\left(t-j,\frac{1}{2}\right)}, \label{conv4}
\end{equation}
Obviously, at times $t=n+\delta$, where $-1/2<\delta<1/2$
\begin{equation}
\mathbf{r}(n+\delta)=\mathbf{r}_{n} \label{equivs}
\end{equation}
The fuzzification approach is completed by replacing all $\mathcal{B}$-functions in Eq. (\ref{conv4}) by the $\mathcal{B}_{\kappa}$-function \cite{fuzzypap} given by
\begin{equation}
\mathcal{B}_{\kappa}(x,y):= \frac{1}{2}\left[ 
\tanh\left(\frac{x+y}{\kappa} \right)-\tanh\left(\frac{x-y}{\kappa} \right)
\right] \label{bkappa}
\end{equation}
This is the so-called \emph{replacement mode I} in \cite{JPHYSCOMPLEX}. We then define the nonlinear embedding
\begin{equation}
\mathbf{r}_{\kappa}(t):=\frac{\sum_{j=0}^{N-1}\mathbf{r}_{j}\mathcal{B}_{\kappa}\left(t-j,\frac{1}{2}\right)}{\sum_{j=0}^{N-1}\mathcal{B}_{\kappa}\left(t-j,\frac{1}{2}\right)}. \label{conv5}
\end{equation}
We observe, that in the limit $\kappa \to 0$, Eq. (\ref{conv5}) becomes Eq. (\ref{conv4}) 
\begin{equation}
\lim_{\kappa \to 0} \mathbf{r}_{\kappa}(t)=\mathbf{r}(t) \label{limes}
\end{equation}
Because of Eqs. (\ref{equivs}) and (\ref{limes}), Eq. (\ref{conv5}) is, in turn, equal to Eq. (\ref{conv1}) in the limit $\kappa \to 0$ when $t\in [0,N-1]$ is an integer.
When $\kappa \to \infty$ we have, since $\mathcal{B}_{\kappa}(x,y) \to y/\kappa$ \cite{JPHYSA, homotopon}
\begin{equation}
\lim_{\kappa \to \infty} \mathbf{r}_{\kappa}(t)=\frac{\sum_{j=0}^{N-1}\mathbf{r}_{j}\frac{1}{2\kappa}}{\sum_{j=0}^{N-1}\frac{1}{2\kappa}}=\frac{1}{N}\sum_{j=0}^{N-1}\mathbf{r}_{j}=\mathbf{r}^{*}
\end{equation}
where we have introduced the \emph{centroid} (`center of masses') $\mathbf{r}^{*}$ of the distribution of landmarks.

We note that, since \cite{JPHYSA, JPHYSCOMPLEX}
\begin{equation}
\sum_{j=0}^{N-1}\mathcal{B}_{\kappa}\left(t-j,\frac{1}{2}\right)=\mathcal{B}_{\kappa}\left(t-\frac{N-1}{2},\frac{N}{2}\right),
\end{equation}
we can write Eq. (\ref{conv5}) as
\begin{equation}
\mathbf{r}_{\kappa}(t)=\sum_{j=0}^{N-1}\mathbf{r}_{j}\frac{\mathcal{B}_{\kappa}\left(t-j,\frac{1}{2}\right)}{\mathcal{B}_{\kappa}\left(t-\frac{N-1}{2},\frac{N}{2}\right)} \quad t\in [0,N-1], t\in \mathbb{R}. \label{type1}
\end{equation}
This equation defines \emph{open} shapes. We shall also consider \emph{closed} shapes. In the letter, although they are still specified by a finite set of landmarks, these return periodically as $t$ is varied from $-\infty$ to $\infty$. We have
\begin{equation}
\mathbf{r}_{n+kN}=\mathbf{r}_{n}, \qquad \forall k \in \mathbb{Z},
\end{equation}
so that landmark $\mathbf{r}_{0}$ is the successor of landmark $\mathbf{r}_{N-1}$. In this case, instead of Eq. (\ref{conv5}), we shall consider the following expression
\begin{equation}
\mathbf{r}_{\kappa}(t):=\frac{\sum_{j=0}^{N-1}\mathbf{r}_{j}\Pi_{\kappa}\left(t-j,\frac{1}{2}; N \right)}{\sum_{j=0}^{N-1}\Pi_{\kappa}\left(t-j,\frac{1}{2}; N\right)}, \quad t\in [0,\infty), t\in \mathbb{R}, \label{type2}
\end{equation}
where
\begin{equation}
\Pi_{\kappa}\left(x,y; T \right)=\mathcal{B}_{\kappa}\left(\sin \frac{\pi x}{T}, \sin \frac{\pi y}{T} \right)
\end{equation}
is the periodic $\mathcal{B}_{\kappa}$-function introduced in \cite{fuzzypap}. From this equation, we note that
\begin{equation}
\mathbf{r}_{\kappa}(t+kN)=\mathbf{r}_{\kappa}(t), \qquad \forall k \in \mathbb{Z}.
\end{equation}
When $\kappa \to \infty$, Eq. (\ref{type2}) reduces to
\begin{equation}
\lim_{\kappa \to \infty} \mathbf{r}_{\kappa}(t)=\frac{1}{N}\sum_{j=0}^{N-1}\mathbf{r}_{j}=\mathbf{r}^{*},
\end{equation}
i.e. the shape collapses to centroid. Eqs. (\ref{type1}) and (\ref{type2}) are the main equations of our approach. 

We now formally define shape. Let $\mathbf{r}_{0}$, $\mathbf{r}_{1}$, $\ldots$, $\mathbf{r}_{N-1}$ be a sequence of $N$ points in $D$-dimensional Euclidean space $\mathbb{R}^{D}$. These points may be repeated in the collection. We call these points \emph{characteristic points of the shape} or, equivalently, \emph{landmarks}. Over these landmarks, we define a $\kappa$\emph{-family of open shapes} by Eq. (\ref{type1}) and  a $\kappa$\emph{-family of closed shapes} by Eq. (\ref{type2}). For each specific value of $\kappa \in (0, \infty)$ we say that the latter equations define an open (resp. closed) shape. We say that a $\kappa$-family of shapes is equal to another if both have exactly the same landmarks in the same order.

We define the $\epsilon$-shape of a given $\kappa$-family as the resulting shape $\mathbf{r}_{\epsilon}(t)$ when $\kappa=\epsilon$ is nonzero but vanishingly small. In all examples in Section \ref{examples} the $\epsilon$-shape is visually indistinguishable of the shapes obtained for $\kappa=0.01$. We say that the $\epsilon$-shape is a faithful representation of the shape to be modelled if, to sufficient degree of accuracy, fits the original shape (note that from the original shape we are only taking a finite set of $N$ landmarks). All $\epsilon$-shapes in the examples are faithful. A faithful representation can always be obtained by taking a sufficient number $N$ of landmarks that are representative of its contour.

Eqs. (\ref{type1}) and (\ref{type2}) constitute \emph{parametric equations} of the shapes and it is useful to see parameter $t$ as time. Although in this work we shall focus on landmarks in $\mathbb{R}^{D}$, other vector spaces are possible as well. Any landmark $n$, $n=0, 1,\ldots, N-1$, can  be recovered from Eqs. (\ref{type1}) and (\ref{type2}) by putting $t=n$ in these equations and taking the limit $\kappa \to 0$
\begin{equation}
\mathbf{r}_{n}=\lim_{\kappa \to 0} \mathbf{r}_{\kappa}(n)
\end{equation} 
The process of obtaining a particular shape begins by specifying a finite suitable set of landmarks. We discuss a variety of examples on how to construct mathematical models for $\kappa$-families of shapes.
In general the landmarks can be specified in a wide variety of methods. For example, they can be obtained: (1) from experimentally measured data or model; (2) randomly (according to a probability distribution) or at will;  (3) by reading them directly from a pre-existing figure; (4) recursively.

The $\kappa$-family constitutes a mathematical model in which shapes are parametrized by means of a `time' real parameter $t$ (a larger number of parameters is possible) and a smoothing real parameter $\kappa$. The shapes within the family are controlled by means of $\kappa$ and are located between the  interpolated landmarks (the $\epsilon$-shape) and the centroid of the landmarks.

It is possible to hierarchically embed families of shapes in more complex structures that we shall call \emph{scenes}. A scene is any disconnected set of shapes and can be expressed as a direct product of them. Let $\mathbf{r}_{\kappa_0}(t,0)$, $\mathbf{r}_{\kappa_1}(t,2)$, $\ldots$, $\mathbf{r}_{\kappa_M-1}(t,M-1)$ be a different $\kappa$-family of shapes given either by Eq. (\ref{type1}) or by Eq. (\ref{type2}) each. We can construct a type-I $\eta$-family of scenes $\mathbf{S}_{\eta}(t)$ as
\begin{eqnarray}
\mathbf{S}_{\eta}(t,q)&=&\sum_{j=0}^{M-1}\mathbf{r}_{\kappa_{j}}(t,j)\frac{\mathcal{B}_{\eta}\left(q-j,\frac{1}{2}\right)}{\mathcal{B}_{\eta}\left(q-\frac{M-1}{2},\frac{M}{2}\right)} \label{Stype1} \\ && t\in [0,N-1], q\in [0,M-1], t,q\in \mathbb{R}. \nonumber 
\end{eqnarray}
and a type-II $\eta$-family of scenes as
\begin{eqnarray}
\mathbf{S}_{\eta}(t,q)&=&\frac{\sum_{j=0}^{M-1}\mathbf{r}_{\kappa_{j}}(t,j)\Pi_{\eta}\left(q-j,\frac{1}{2}; M \right)}{\sum_{j=0}^{M-1}\Pi_{\eta}\left(t-j,\frac{1}{2}; M\right)} \label{Stype2} \\ && t,q \in [0,\infty), t,q\in \mathbb{R}. \nonumber 
\end{eqnarray}
We see that shapes play within a scene an analogous role to landmarks within a shape. In the limit $\eta \to 0$ all shapes within the scene are disconnected. In the limit $\eta \to \infty$ the shapes within the scene mix with each other and collapse to the average shape
\begin{equation}
\lim_{\eta \to \infty}\mathbf{S}_{\eta}(t,q)=\frac{1}{M}\sum_{j=0}^{M-1}\mathbf{r}_{\kappa_{j}}(t,j)
\end{equation}
Each shape $m$ within a $\kappa$-family of scenes can be recovered from Eq. (\ref{Stype1}) and (\ref{Stype2}) by putting $q=m \in [0, M-1]$ and taking the limit $\eta \to 0$. 
 \begin{equation}
\lim_{\eta \to 0}\mathbf{S}_{\eta}(t,n)=\mathbf{r}_{\kappa_{n}}(t,n)
\end{equation}

Scenes can be hierarchically embedded in increasingly complex structures in a similar way as landmarks are embedded in shapes and the latter are embedded in scenes. Every element in the complex structure, landmark, shape, or scene, can thus be recovered as sketched above. We find that our theory thus satisfies Leyton's criteria for a successful theory of shape: 1) \emph{maximization of transfer} and 2) \emph{maximization of recoverability} \cite{Leyton}. The first criterion is satisfied because complex structures in our theory can always be seen as the result of the transfer of simpler structures since these are embedded in the former. The second criterion is also satisfied because all elements that are being transferred can be recovered out of simple operations (taking limits) on the superior structures. Leyton's generative theory of shape is abstract and makes use of group theoretical notions (the wreath product being at its core) \cite{Leyton}. We believe than our  theory of shape is simpler because, being general, it is also specific in the following sense: a mathematical model can always be fully worked out explicitly and systematically for any shape taken as example (and this is why we call it a \emph{constructive} theory).

\section{Convexity and transformations} \label{convex}

The $\kappa$-families of shapes obtained from Eqs. (\ref{type1}) and (\ref{type2}) have several important mathematical properties that we now discuss. We have the following result.

\begin{theor} \label{theconvex} Let \emph{conv}$_{\mathbf{r}}$ denote the convex hull of the points $\mathbf{r}_0$, $\mathbf{r}_1$, $\ldots$, $\mathbf{r}_{N-1}$ in $D$-dimensional Euclidean space $\mathbb{R}^{D}$. Then: i) the points $\mathbf{r}_{\kappa}(t)$ in any open shape of a type-1 $\kappa$-family of shapes given by Eq. (\ref{type1}) satisfy $\mathbf{r}_{\kappa}(t) \in$ \emph{conv}$_{\mathbf{r}}$, $\forall \kappa \in (0,\infty)$ and $\forall t\in [0,N-1], t\in \mathbb{R}$; ii) the points $\mathbf{r}_{\kappa}(t)$ in any closed shape of a type-2 $\kappa$-family of shapes given by Eq. (\ref{type2}) satisfy $\mathbf{r}_{\kappa}(t) \in$ \emph{conv}$_{\mathbf{r}}$, $\forall \kappa \in (0,\infty)$ and $\forall t\in [0,\infty), t\in \mathbb{R}$.
\end{theor}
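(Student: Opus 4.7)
The plan is to recognize that in both equations (\ref{type1}) and (\ref{type2}) the vector $\mathbf{r}_{\kappa}(t)$ is a linear combination $\sum_{j=0}^{N-1} w_{j,\kappa}(t)\, \mathbf{r}_j$, and to show that this combination is in fact a convex combination. Since the convex hull of a finite set is precisely the set of all such convex combinations, the conclusion will then follow immediately. Thus the whole argument reduces to two claims: (a) the weights $w_{j,\kappa}(t)$ are all non-negative, and (b) they sum to $1$.

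For claim (b), in the type-II case the denominator in Eq.~(\ref{type2}) is by definition $\sum_{j} \Pi_{\kappa}(t-j, 1/2; N)$, so the weights $w_{j,\kappa}(t) = \Pi_{\kappa}(t-j,1/2;N)/\sum_{k} \Pi_{\kappa}(t-k,1/2;N)$ sum to $1$ trivially. In the type-I case, one uses the identity $\sum_{j=0}^{N-1} \mathcal{B}_{\kappa}(t-j,1/2) = \mathcal{B}_{\kappa}(t-(N-1)/2, N/2)$ already stated in the excerpt preceding Eq.~(\ref{type1}), so that the weights $w_{j,\kappa}(t) = \mathcal{B}_{\kappa}(t-j,1/2)/\mathcal{B}_{\kappa}(t-(N-1)/2,N/2)$ also sum to $1$.

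For claim (a), the key ingredient is the positivity of $\mathcal{B}_{\kappa}(x,y)$ whenever $y>0$. From the definition (\ref{bkappa}),
\begin{equation}
\mathcal{B}_{\kappa}(x,y) = \tfrac{1}{2}\bigl[\tanh((x+y)/\kappa) - \tanh((x-y)/\kappa)\bigr],
\end{equation}
and since $\tanh$ is strictly increasing and $\kappa>0$, the condition $y>0$ gives $x+y>x-y$ and therefore $\mathcal{B}_{\kappa}(x,y)>0$ for every $x\in\mathbb{R}$. Applying this with $y=1/2$ shows that every numerator in (\ref{type1}) is positive; applying it with $y=N/2$ shows the denominator is positive; hence each $w_{j,\kappa}(t)>0$. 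For the type-II case, one applies the same positivity lemma to $\mathcal{B}_{\kappa}$ evaluated at the transformed argument $y=\sin(\pi/(2N))>0$ (valid for all $N\ge 2$), so that $\Pi_{\kappa}(t-j, 1/2; N)>0$ for every $j$ and every $t$, yielding again positive weights.

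Putting (a) and (b) together, in both cases $\mathbf{r}_{\kappa}(t)$ is a convex combination of the landmarks $\mathbf{r}_0,\ldots,\mathbf{r}_{N-1}$, hence lies in $\mathrm{conv}_{\mathbf{r}}$, for all admissible $t$ and all $\kappa\in(0,\infty)$. The only real ``obstacle'' is isolating the positivity lemma for $\mathcal{B}_{\kappa}(\cdot, y)$ with $y>0$; once this is noted, the rest is an essentially mechanical verification using the sum identity from the preceding section.
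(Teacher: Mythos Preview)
Your proposal is correct and follows essentially the same route as the paper: both recognize $\mathbf{r}_{\kappa}(t)$ as a convex combination of the landmarks and invoke the characterization of the convex hull as the set of all such combinations. If anything, your treatment is slightly more explicit than the paper's, since you actually justify the positivity of the weights via the strict monotonicity of $\tanh$, whereas the paper simply asserts $0\le\lambda_j\le 1$ without further comment.
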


\begin{proof} Since $\text{conv}_{\mathbf{r}}$ is a convex set and $\mathbf{r}_{j} \in  \text{conv}_{\mathbf{r}}$, for $j=0, \ldots, N-1$ we have that any affine combination
\begin{equation}
\mathbf{r}'=\sum _{j=0}^{N-1}\lambda _{j}\mathbf{r}_{j}  \label{affine}
\end{equation}
satisfies $\mathbf{r}' \in  \text{conv}_{\mathbf{r}}$ for any non-negative real numbers $\lambda_0$, $\ldots$, $\lambda_{N-1}$ such that $\lambda_0+\lambda_1+\ldots \lambda_{N-1}=1$ \cite{Rockafellar}. If we, therefore, take
\begin{equation}
\lambda_{j}=\frac{\mathcal{B}_{\kappa}\left(t-j,\frac{1}{2}\right)}{\mathcal{B}_{\kappa}\left(t-\frac{N-1}{2},\frac{N}{2}\right)}=\frac{\mathcal{B}_{\kappa}\left(t-j,\frac{1}{2}\right)}{\sum_{j=0}^{N-1}\mathcal{B}_{\kappa}\left(t-j,\frac{1}{2}\right)} \label{cho1}
\end{equation}
we have both $0\le \lambda_{j} \le 1$, and $\sum_{j=0}^{N-1}\lambda_{j}=1$. By replacing Eq. (\ref{cho1}) in Eq. (\ref{affine}) we obtain Eq. (\ref{type1}) and i) follows. Similarly, if we take
\begin{equation}
\lambda_{j}=\frac{\Pi_{\kappa}\left(t-j,\frac{1}{2}; N\right)}{\sum_{j=0}^{N-1}\Pi_{\kappa}\left(t-j,\frac{1}{2}; N\right)} \label{cho2}
\end{equation}
we, again, have $0\le \lambda_{j} \le 1$, and $\sum_{j=0}^{N-1}\lambda_{j}=1$. By replacing Eq. (\ref{cho2}) in Eq. (\ref{affine}) we obtain Eq. (\ref{type2}) and ii) follows.
\end{proof}

The importance of the above theorem lies in that all deformed shapes within the same $\kappa$-family are easily located in Euclidean space $\mathbb{R}^{D}$: they are all  found within the convex hull of the landmarks. As we shall see in Sec. \ref{dynapp}, Theorem \ref{theconvex} finds an important application in finding viable trajectories of complex dynamical systems and in designing invariant sets. 

The $\kappa$-families of shapes in Eqs. (\ref{type1}) and (\ref{type2}) are \emph{nonlinear} in $t$ and $\kappa$ but are \emph{linear} functions (combinations) of the landmarks. This has the following important implication: if $\mathbf{T}$ is any transformation (matrix) sending each landmark $\mathbf{r}_{j}$ to $\mathbf{T}\cdot \mathbf{r}_{j}$ (with $\cdot$ denoting the inner product), then the $\kappa$ family of shapes transforms as $\mathbf{r}_{\kappa}(t) \to \mathbf{T}\cdot \mathbf{r}_{\kappa}(t)$. Thus, if, for example, $\mathbf{T}$ is an isometry transformation, the shapes $\mathbf{r}_{\kappa}(t)$ and $\mathbf{T}\cdot \mathbf{r}_{\kappa}(t)$ for a fixed $\kappa$ are congruent.

\section{Application to dynamical systems} \label{dynapp}

A smooth dynamical system
\begin{equation}
\dot{\mathbf{y}}=\mathbf{F}(\mathbf{y}) \label{smooth}
\end{equation}
can be discretized in time as
\begin{equation}
\mathbf{y}(t+\delta)\approx \mathbf{y}(t)+\delta \mathbf{F}(\mathbf{y}(t))
\end{equation}
In general, we have, from this latter equation
\begin{equation}
\mathbf{y}(t+(n+1)\delta)\approx\mathbf{y}(t+n\delta)+\delta \mathbf{F}(\mathbf{y}(t+n\delta)) \label{evoldyns}
\end{equation}
Therefore, starting from $t=0$ and ending in time $t=N\delta$, we can obtain $N$ suitable landmarks for the trajectory of the dynamical system up to resolution $\delta$  as $\left(\mathbf{r}_0, \mathbf{r}_{1}, \mathbf{r}_{2}, \ldots, \mathbf{r}_{N-1} \right):=\left(\mathbf{y}(0), \mathbf{y}(\delta), \mathbf{y}(2\delta), \ldots, \mathbf{y}((N-1)\delta) \right)$  
and the trajectory can be embedded in a $\kappa$-family of open shapes by means of Eq. (\ref{type1}) where the $\mathbf{r}_{j}$'s are iteratively obtained from Eq. (\ref{evoldyns}) as
\begin{equation}
\mathbf{r}_{j}=\mathbf{r}_{j-1}+\delta \mathbf{F}(\mathbf{r}_{j-1})  \label{evolsmooth}
\end{equation}
starting from an initial condition $\mathbf{r}_{0}$. 

We can also obtain periodic, closed shapes, over these same  $N$ landmarks from Eq. (\ref{type2}). Any trajectory of the smooth dynamical system can thus be embedded in a $\kappa$-family of shapes by taking $\delta $ sufficiently small so that the landmarks obtained constitute a faithful representation of the trajectory up to precision $\delta$. As an example, in Sec. \ref{Lorenz} we construct $\kappa$-families of shapes for the Lorenz attractor.

If the dynamical system is a discrete map, instead of Eq. (\ref{smooth}) we have that the dynamics is governed at discrete times $t=j$ by 
\begin{equation}
\mathbf{y}_{j}=\mathbf{G}(\mathbf{y}_{j-1}) \label{discmap}
\end{equation}
A set of $N$ iterations starting from an initial condition $\mathbf{y}_0$ can be made with this map so that $N$ landmarks $\mathbf{r}_{j}=\mathbf{y}_j$ are generated, which exactly coincide with the trajectory of the discrete map. The trajectory jumps discontinuously at discrete times, but can be embedded in a \emph{continuous} trajectory by means of an appropriate $\kappa$-family of shapes (type I) or (type II) by using Eqs. (\ref{type1}) and (\ref{type2}) above.  In Sec. \ref{logimap} we show, as example, open $\kappa$-families of shapes for the logistic map.

By using the concept of $\eta$-family of scenes defined by Eqs. (\ref{Stype1}) and (\ref{Stype2}), it is possible to generalize the above costructions to spatially extended systems as partial differential equations, coupled map lattices and cellular automata. 

The extension of the above embeddings to non-autonomous dynamical systems is straigthforward. Of major interest in those complex systems (in the presence of controls and perturbations) is the notion of positive invariance \cite{Blanchini1, Blanchini2, Nagumo}. A set $S$ is said to be positively invariant with respect to a dynamical system if any trajectory $\mathbf{y}(t)$ of the dynamical system that is well defined for any $t>0$ and that starts with initial condition $\mathbf{y}_0 \in S$ satisfies $\mathbf{y}(t)\in S$ for $t>0$. A set that contains a stable limit cycle (closed periodic orbit) or a stable fixed point that attract all initial conditions in $S$, is clearly positively invariant. 

A broader notion of positive invariance is crucial in so-called viability theory \cite{Aubin1, Aubin2}. Many complex natural and artificial systems, organizations, and networks do not evolve deterministically nor stochastically \cite{Aubin1,Aubin2}. These systems are approached in a general way considering the trajectories $\mathbf{r}(t)$ (system states) to be constrained to belong to a set $S$ instead of being governed by systems of  differential or stochastic equations. Indeed, the main requirement is positive invariance for the trajectories defined analytically or experimentally, so that one has
\begin{eqnarray}
\mathbf{r}(t) &\in& S.  \label{viable}
\end{eqnarray}
An important consequence of Theorem \ref{theconvex} in this context is that if $S$ is convex then we can always augment such a complex system given by Eq. (\ref{viable}) with a control parameter $\kappa$ so that for all $\kappa > 0$ we have, as well,
\begin{eqnarray}
\mathbf{r}_{\kappa}(t) &\in& S  \label{viable2}
\end{eqnarray}
with $\mathbf{y}_{\kappa}(t)$ being given by Eq. (\ref{type1}) or Eq. (\ref{type2}).   Indeed, as a direct corollary of Theorem \ref{theconvex} applied to dynamical systems and positively invariant sets, we have the following.

\begin{theor} Let $S$ be any convex set that is positively invariant with respect to a dynamical system with globally defined trajectories $\mathbf{y}(t)\in S$ ($\forall t>0$) starting from initial conditions $\mathbf{y}_{0}\in S$.  Let $\mathbf{r}_{j}$, $j=0,\ldots, N-1$ be $N$ points taken on any of these trajectories. Then, all points in the curve $\mathbf{r}_{\kappa}(t)$ obtained from Eq. (\ref{type1}) satisfy $\mathbf{r}_{\kappa}(t) \in S$, $\forall \kappa >0$ and $\forall t \in [0, N-1]$.
\end{theor}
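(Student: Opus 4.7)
The plan is to deduce this statement as a direct corollary of Theorem \ref{theconvex}, using only two standard facts: positive invariance of $S$ with respect to the trajectories, and the fact that a convex set contains the convex hull of any finite subset of its points.

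First, I would invoke positive invariance to pin down the location of the landmarks. Because $\mathbf{y}_{0} \in S$ and every trajectory of the dynamical system satisfies $\mathbf{y}(t) \in S$ for all $t > 0$, any $N$ points $\mathbf{r}_{0}, \mathbf{r}_{1}, \ldots, \mathbf{r}_{N-1}$ chosen on such a trajectory automatically belong to $S$. This step uses only the hypothesis on $S$ and requires no computation.

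Next, I would use the convexity of $S$. Since $S$ is convex and contains each $\mathbf{r}_{j}$, it contains every affine combination $\sum_{j=0}^{N-1}\lambda_{j}\mathbf{r}_{j}$ with $\lambda_{j}\geq 0$ and $\sum_{j}\lambda_{j}=1$. Equivalently, $\mathrm{conv}_{\mathbf{r}} \subseteq S$, where $\mathrm{conv}_{\mathbf{r}}$ denotes the convex hull of the landmarks as in Theorem \ref{theconvex}.

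Finally, I would apply part i) of Theorem \ref{theconvex} to obtain $\mathbf{r}_{\kappa}(t) \in \mathrm{conv}_{\mathbf{r}}$ for every $\kappa > 0$ and every $t \in [0, N-1]$, and combine this with the inclusion $\mathrm{conv}_{\mathbf{r}} \subseteq S$ established in the previous step to conclude $\mathbf{r}_{\kappa}(t) \in S$. There is no real obstacle here: the content of the statement is entirely carried by Theorem \ref{theconvex}, with convexity of $S$ serving only to extend the containment from $\mathrm{conv}_{\mathbf{r}}$ to $S$. The only point worth mentioning explicitly is that this chain of reasoning is independent of which particular trajectory is used to extract the landmarks, so the same conclusion holds uniformly over all admissible choices of $\mathbf{r}_{0}, \ldots, \mathbf{r}_{N-1}$.
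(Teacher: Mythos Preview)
Your proposal is correct and follows essentially the same approach as the paper: the paper presents this theorem explicitly as a direct corollary of Theorem~\ref{theconvex}, noting just beforehand that any convex set $S$ contains the convex hull $\text{conv}_{\mathbf{r}}$ of any finite subset of its points. Your three-step argument (landmarks lie in $S$ by positive invariance, $\text{conv}_{\mathbf{r}}\subseteq S$ by convexity, $\mathbf{r}_{\kappa}(t)\in\text{conv}_{\mathbf{r}}$ by Theorem~\ref{theconvex}) spells out exactly the reasoning the paper leaves implicit.
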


Any convex set $S$ includes as a subset the convex hull $\text{conv}_{\mathbf{r}}$ of any set of points $\{\mathbf{r}_{j}; j=0,\ldots, N-1\} \subset S$. Then, another direct application of Theorem \ref{theconvex} to closed trajectories of any dynamical system (continuous or discrete) is the following.

\begin{theor} Let $\mathbf{y}(t)=\mathbf{y}(t+T)$ denote a closed periodic orbit of a dynamical system with period $T$ and let $\mathbf{r}_{j}$, $j=0,\ldots, N-1$ be $N$ points taken on the 
orbit. Then $\mathbf{r}_{\kappa}(t)$ given by Eq. (\ref{type2}) is a closed curve with period $T$ and is contained in the convex hull  \emph{conv}$_{\mathbf{r}}$ of the points taken on the trajectory of the dynamical system, i.e. $\mathbf{r}_{\kappa}(t) \in$ \emph{conv}$_{\mathbf{r}}$, $\forall \kappa >0$ and $\forall t>0$.
\end{theor}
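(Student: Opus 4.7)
The plan is to split the statement into its two assertions: (a) the curve $\mathbf{r}_{\kappa}(t)$ defined by Eq.~(\ref{type2}) is closed with period $T$, and (b) it lies in $\text{conv}_{\mathbf{r}}$ for every $\kappa>0$ and every $t>0$. Both parts are corollaries of results already established in the paper, so the proof is essentially assembly rather than new work.

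For assertion (b) I would simply invoke Theorem~\ref{theconvex}(ii) directly. The hypotheses of that theorem ask only for a finite ordered set of points in $\mathbb{R}^{D}$ used as landmarks of the type-2 family Eq.~(\ref{type2}); it is irrelevant whether the landmarks arose from a dynamical orbit, from an image, or by any other means. Since the $\mathbf{r}_{j}$ specified in the hypothesis are, by assumption, $N$ points in $\mathbb{R}^{D}$, Theorem~\ref{theconvex}(ii) immediately gives $\mathbf{r}_{\kappa}(t)\in\text{conv}_{\mathbf{r}}$ for all $\kappa\in(0,\infty)$ and all $t\in[0,\infty)$.

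For assertion (a) I would appeal to the periodicity of the kernel $\Pi_{\kappa}(x,\tfrac{1}{2};N)$, which is periodic in $x$ with period $N$. Inserting this into Eq.~(\ref{type2}) shows that numerator and denominator are separately periodic in $t$ with period $N$, so $\mathbf{r}_{\kappa}(t+N)=\mathbf{r}_{\kappa}(t)$ identically — in fact this identity is recorded explicitly in the paragraph immediately following Eq.~(\ref{type2}). Hence $\mathbf{r}_{\kappa}$ is a closed curve whose fundamental period in the embedding parameter is $N$. To align this with the period $T$ of the underlying orbit, I would introduce the natural affine reparametrization $\tau=tT/N$, under which successive landmarks $\mathbf{r}_{j}$ correspond to successive sample points on the orbit and one full traversal of $j=0,1,\ldots,N-1,0,\ldots$ corresponds to one period of $\mathbf{y}(\tau)$. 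Under this identification the embedded curve has period $T$, as claimed.

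There is no genuine obstacle here; the statement really is a direct corollary, as the text itself indicates. The one point worth flagging explicitly is the implicit identification between the intrinsic parameter $t$ of the embedding (whose fundamental period is the integer $N$) and the physical time of the orbit (whose period is $T$); stating the reparametrization $\tau=tT/N$ explicitly removes any ambiguity in the numerical meaning of ``period $T$'' in the conclusion.
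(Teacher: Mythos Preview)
Your proposal is correct and matches the paper's approach: the paper gives no explicit proof of this theorem at all, simply labelling it ``another direct application of Theorem~\ref{theconvex}'', which is exactly your route for part~(b). Your additional remark flagging the distinction between the intrinsic period $N$ of $\mathbf{r}_{\kappa}(t)$ and the physical period $T$, resolved by the reparametrization $\tau=tT/N$, is a genuine clarification that the paper leaves implicit.
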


\section{Examples} \label{examples}

\subsection{1D shapes: time series} \label{logimap}

In one dimension, the landmarks $\mathbf{r}_{0}$, $\mathbf{r}_{1}$, $\ldots$, $\mathbf{r}_{N-1}$ become just a collection of real scalars $X_0$, $X_1$, $\ldots$, $X_{N-1}$. It is straightforward to obtain from these landmarks and Eqs. (\ref{type1}) and (\ref{type2}) $\kappa$-families of shapes. For open shapes (non-periodic time series on a time window) we have
\begin{eqnarray}
r_{\kappa}(t)&=&\sum_{j=0}^{N-1}X_{j}\frac{\mathcal{B}_{\kappa}\left(t-j,\frac{1}{2}\right)}{\mathcal{B}_{\kappa}\left(t-\frac{N-1}{2},\frac{N}{2}\right)} \label{1Dtype1} \\ &&\qquad \qquad \qquad \qquad \qquad t\in [0,N-1], t\in \mathbb{R}, \nonumber
\end{eqnarray}
and for periodic time series
\begin{eqnarray}
r_{\kappa}(t)&=&\frac{\sum_{j=0}^{N-1}X_{j}\Pi_{\kappa}\left(t-j,\frac{1}{2}; N\right)}{\sum_{j=0}^{N-1}\Pi_{\kappa}\left(t-j,\frac{1}{2}; N\right)} \qquad t\in \mathbb{R}. \qquad \label{1Dtype2}
\end{eqnarray}

\begin{figure*}
\includegraphics[width=0.85 \textwidth]{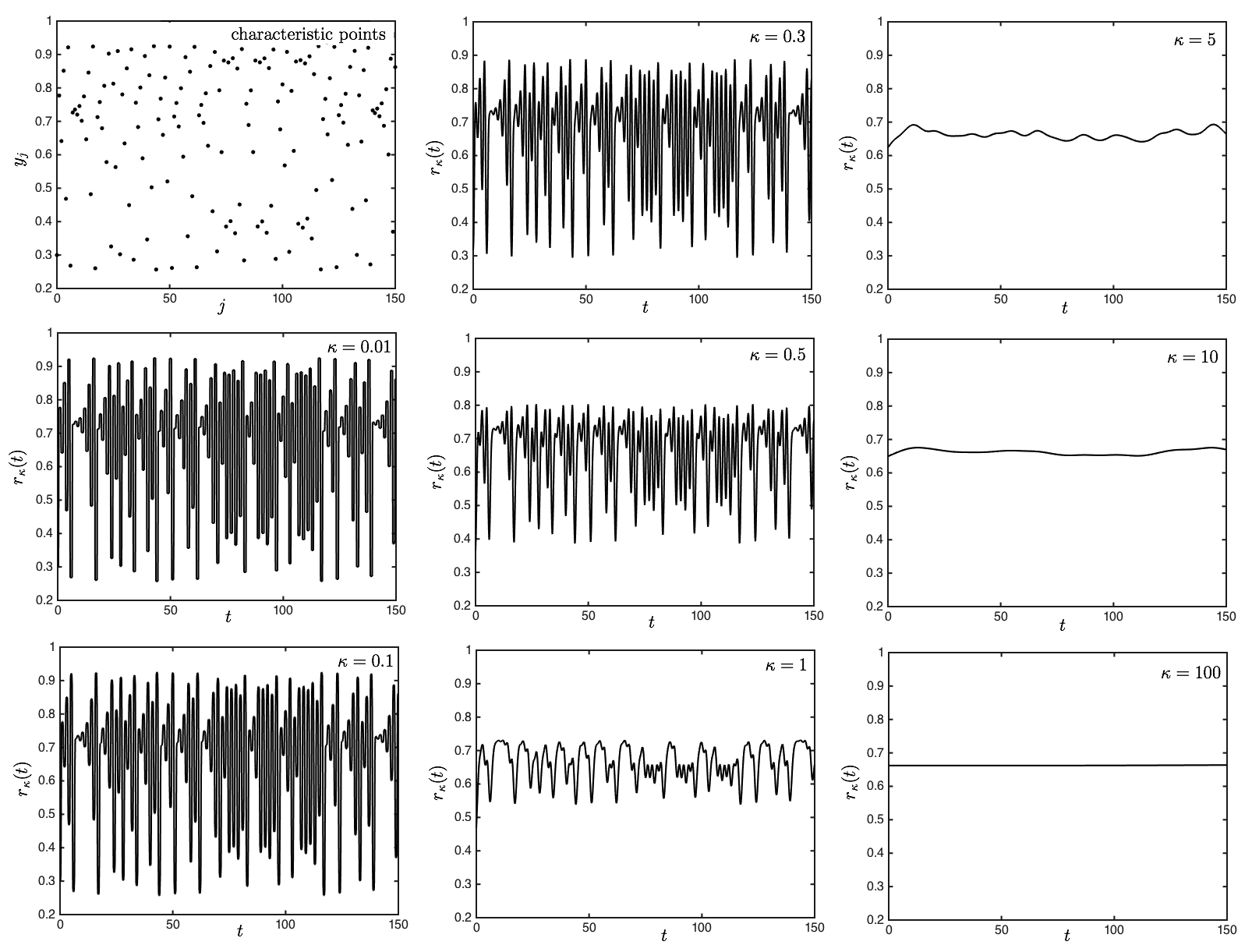}
\caption{\scriptsize{Shapes $r_{\kappa}(t)$, calculated from Eq. (\ref{logikappa}) for the landmarks (characteristic points) $y_j$ shown in the top left panel for the values of the continuum parameter $\kappa$ indicated on the panels. The landmarks are calculated by performing 149 iterations of the logistic map $y_{j}=\mu y_{j-1} (1-y_{j-1})$, with $y_{0}=0.3$ and $\mu=3.5$.}} \label{f11a}
\end{figure*}
\begin{figure*}
\includegraphics[width=0.8 \textwidth]{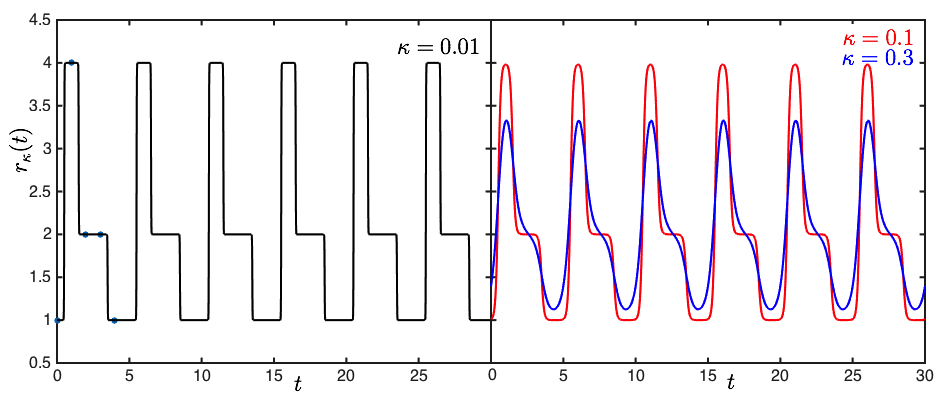}
\caption{\scriptsize{Periodic shapes obtained from the landmarks $(X_0,X_1,\ldots, X_{4})=(1,4,2,2,1)$ and Eq. (\ref{1Dtype2}) for the values of $\kappa$ indicated on the panels. In the left panel, the landmarks are also shown.}} \label{f11b}
\end{figure*}

As an example we consider the logistic map as a model providing the landmarks. We put $X_{j}=y_j$ where $y_j$ is given by
\begin{equation}
y_{j}=\mu y_{j-1} (1-y_{j-1})
\end{equation}
starting from a given $y_0$. If we perform 149 iterations of this map, we obtain a sequence of 150 landmarks and we can directly construct a $\kappa$-family of open shapes from these landmarks, using Eq. (\ref{type1})
 \begin{eqnarray}
r_{\kappa}(t)&=&y_0 \frac{\mathcal{B}_{\kappa}\left(t,\frac{1}{2}\right)}{\mathcal{B}_{\kappa}\left(t-\frac{149}{2},75\right)} \nonumber \\
&&+\mu   \sum_{j=1}^{149}y_{j-1} (1-y_{j-1})\frac{\mathcal{B}_{\kappa}\left(t-j,\frac{1}{2}\right)}{\mathcal{B}_{\kappa}\left(t-\frac{149}{2},75\right)} \qquad \label{logikappa}\\
&& \qquad \qquad \qquad \qquad \qquad t\in [0,149], t\in \mathbb{R}. \nonumber
\end{eqnarray}
In Fig. \ref{f11a}, $r_{\kappa}(t)$ obtained from Eq. (\ref{logikappa}) is plotted for different values of $\kappa$, an the parameters $\mu=3.5$ and $y_{0}=0.3$ selected to be in the chaotic regime. The landmarks (characteristic points) are also shown. In every case, the shapes $r_{\kappa}(t)$ obtained are smooth and infinitely differentiable. For $\kappa$ sufficiently low, the shapes interpolate the landmarks through the $\epsilon$-shape with growing precision as $\kappa \to 0$. For $\kappa$ large, details within a scale $s=2\kappa$ are averaged out and the resulting shapes have broader variations (the curve being always smooth). As $\kappa \to \infty$, the shape collapses on the average value of the landmarks for every $t$.

A possible use of the shape theory in 1D above sketched is the synthesis of sound. Complex periodic waveforms can be simply designed by giving the landmarks as input (these can also be read from recorded waveforms). No Fourier decomposition is necessary to select the right harmonics. Then, by using Eq. (\ref{type2}), an infinite family of periodic signals with the same frequency can be constructed from the input landmarks.

An example of an invented periodic signal is shown in Fig. \ref{f11b}. By choosing the five landmarks $(X_0,X_1,\ldots, X_{4})=(1,4,2,2,1)$ we can calculate any shape within the $\kappa$-family by replacing them in Eq. (\ref{1Dtype2}) and putting $N=5$. In Fig. \ref{f11b} the shapes with $\kappa=0.01, 0.1$ and $0.3$ are shown together with the landmarks. We observe that five points together with the parameter $\kappa$ is enough to specify an infinite family of qualitatively related waveforms. Note that, because for $\kappa =0.01$ the shapes tend to change more abruptly between the different landmarks, a huge number of Fourier modes would be needed to reproduce such periodic signals, the Gibbs phenomenon also playing an important role. No Gibbs or Runge phenomena are present in the interpolation provided by the $\epsilon$-shape in our theory because there are no oscillations of the $\epsilon$-shape between the landmark points under interpolation. We remark that this interpolation is \emph{not} piecewise: the shapes are all infinitely differentiable functions of both $t$ and $\kappa$.

\subsection{2D shapes: planar curves}

In two dimensions we have the vector $\mathbf{r}_{\kappa}(t)=(x_{\kappa}(t), y_{\kappa}(t))$ that yields a plane curve in parametric form and Eq. (\ref{type1}) for open shapes reduces to
\begin{eqnarray}
x_{\kappa}(t)&=&\sum_{j=0}^{N-1}X_{j}\frac{\mathcal{B}_{\kappa}\left(t-j,\frac{1}{2}\right)}{\mathcal{B}_{\kappa}\left(t-\frac{N-1}{2},\frac{N}{2}\right)} \label{2Dtype1x} \\
y_{\kappa}(t)&=&\sum_{j=0}^{N-1}Y_{j}\frac{\mathcal{B}_{\kappa}\left(t-j,\frac{1}{2}\right)}{\mathcal{B}_{\kappa}\left(t-\frac{N-1}{2},\frac{N}{2}\right)}  \label{2Dtype1y} \\ 
&& \qquad \qquad \qquad \qquad t\in [0,N-1], t\in \mathbb{R} \nonumber
\end{eqnarray}
where $(X_j, Y_j)=\mathbf{r}_{j}$ specifies the $j$-th landmark. For closed shapes in the plane Eq. (\ref{type2}) is equivalent to
\begin{eqnarray}
x_{\kappa}(t)&=&\frac{\sum_{j=0}^{N-1}X_{j}\Pi_{\kappa}\left(t-j,\frac{1}{2}; N\right)}{\sum_{j=0}^{N-1}\Pi_{\kappa}\left(t-j,\frac{1}{2}; N\right)} \label{2Dtype2x} \\ 
y_{\kappa}(t)&=&\frac{\sum_{j=0}^{N-1}Y_{j}\Pi_{\kappa}\left(t-j,\frac{1}{2}; N\right)}{\sum_{j=0}^{N-1}\Pi_{\kappa}\left(t-j,\frac{1}{2}; N\right)}   \qquad t\in \mathbb{R} \quad \label{2Dtype2y}
\end{eqnarray}
where $(X_j, Y_j)=\mathbf{r}_{j}$ specifies the $j$-th landmark and we have $(x_{\kappa}(t), y_{\kappa}(t))=\mathbf{r}_{\kappa}(t)$, as before.

An example of a $\kappa$-family of open shapes described by Eqs. (\ref{2Dtype1x}) and (\ref{2Dtype1y}) is shown in Fig. \ref{signat} in which 50 landmarks are chosen randomly within the unit square from the uniform distribution. For $\kappa=0.01$, the shape interpolates among all the landmarks but, as $\kappa$ is increased, the shapes within the family tend to shrink to the centroid of the landmarks. The convex hull of the landmarks is shown in orange shadow and it is observed that for all $\kappa >0$, the shapes are contained within the convex hull, as predicted by Theorem \ref{theconvex}. If the shape is regarded as a trajectory of a complex dynamical system, an infinite number of viable trajectories in the same set can be generated by means of the above method, selecting appropriate subsets of landmarks.  As $\kappa$ is increased, the finest details are gradually averaged out and, as a result, the shapes are smoothened, but they still constitute different deformations that look qualitatively similar.

\begin{figure*}
\begin{center}
\includegraphics[width=1.0 \textwidth]{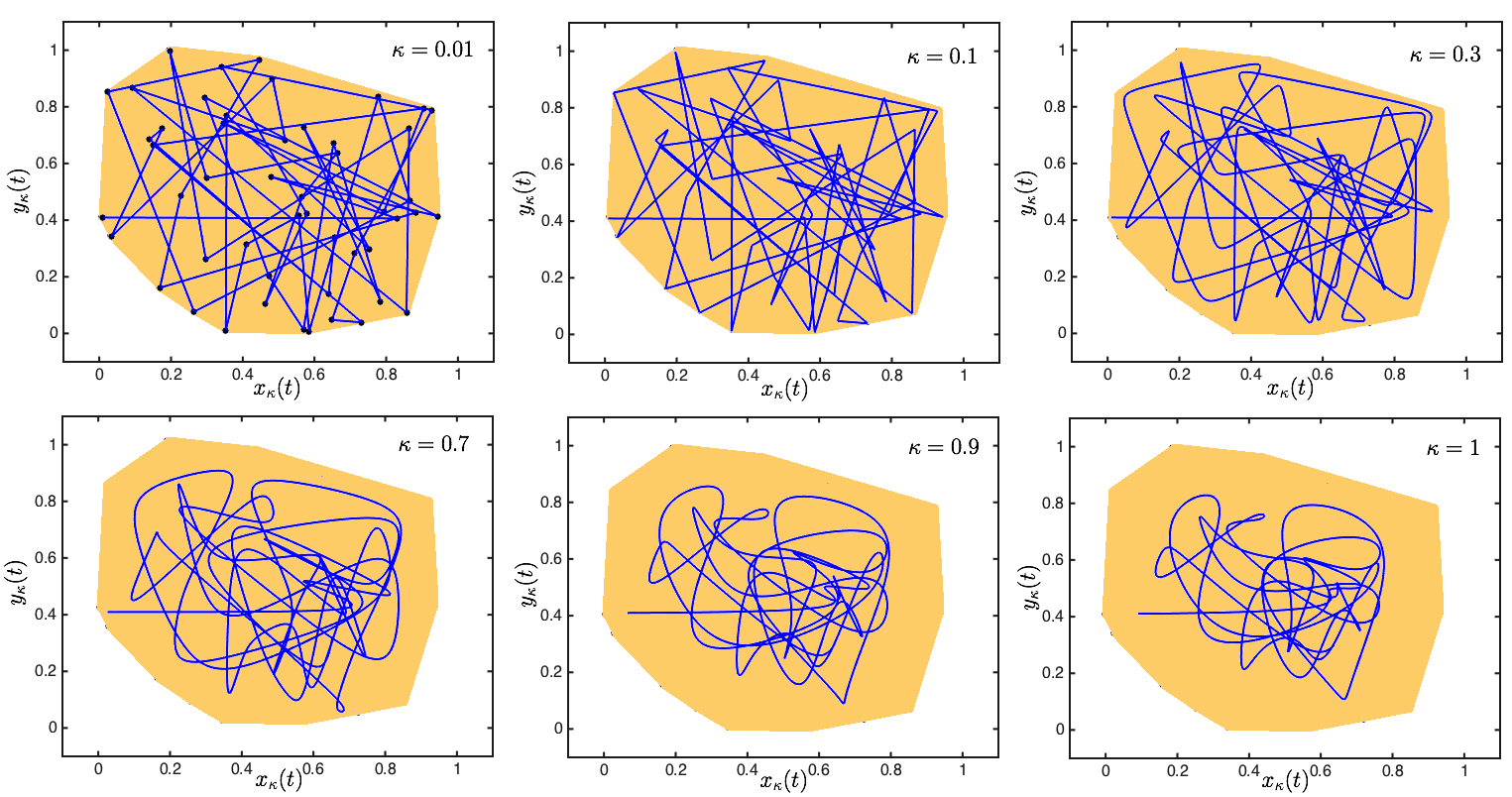}
\caption{\scriptsize{Shapes obtained from Eqs. (\ref{2Dtype1x}) and (\ref{2Dtype1y}) for 50 landmarks taken randomly in the unit square from the uniform distribution. Shown in orange shadow is the convex hull of the landmarks. Values of $\kappa$ are indicated on the panels and the landmarks are also shown in the top leftmost panel.}} \label{signat}
\end{center}
\end{figure*}

We now discuss closed shapes in 2D. John von Neumann's joke ``With four parameters I can fit an elephant, and with five I can make him wiggle his trunk'' \cite{Dyson} has motivated some  work on how experimental data on the plane can be fitted by parametric equations of planar curves \cite{Mayer, Wei, Piantadosi}. A most popular method is provided by expanding the $x(t)$ and $y(t)$ coordinates of a closed contour (in this work $x_{\kappa}(t)$ and $y_{\kappa}(t)$) in Fourier series \cite{Kuhl}
\begin{eqnarray}
x(t)&=&\sum_{n=0}^{\infty}\left[A_{n}\cos\left(\frac{2\pi nt}{T}\right)+B_{n}\sin\left(\frac{2\pi nt}{T}\right)\right] \qquad \label{f1} \\
y(t)&=&\sum_{n=0}^{\infty}\left[C_{n}\cos\left(\frac{2\pi nt}{T}\right)+D_{n}\sin\left(\frac{2\pi nt}{T}\right)\right] \qquad \label{f2}
\end{eqnarray}
where the coefficients $A_{n}$, $B_{n}$, $C_{n}$ and $D_{n}$ are related to $x(t)$ and $y(t)$ by the integrals
\begin{eqnarray}
A_n&=&\frac{2}{T}\int_{0}^{T}x(t)\cos\left(\frac{2\pi nt}{T}\right) \text{d}t, \\  B_n&=&\frac{2}{T}\int_{0}^{T}x(t)\sin\left(\frac{2\pi nt}{T}\right) \text{d}t, 
\end{eqnarray}
\begin{eqnarray}
C_n&=&\frac{2}{T}\int_{0}^{T}y(t)\cos\left(\frac{2\pi nt}{T}\right) \text{d}t, \\ D_n&=&\frac{2}{T}\int_{0}^{T}y(t)\sin\left(\frac{2\pi nt}{T}\right) \text{d}t.
\end{eqnarray}
A calculation of these integrals can be done by assuming $N$ distinct landmarks piecewisely joined by rectilinear segments by means of a chain code \cite{Kuhl}. Smooth shapes are then obtained by truncating the sums in Eqs. (\ref{f1}) and (\ref{f2}) \cite{Kuhl}. In this way, any closed shape in 2D can be described. This popular method \cite{McLellan2} has been applied to biological shapes in animals \cite{Bierbaum, Diaz, Ferson, Rohlf} and plants \cite{Furuta, Iwata, McLellan, Ohsawa, White} and has been extended to open shapes in 3D by means of the discrete cosine transform \cite{Zhou}. 

\begin{figure*}
\begin{center}
\includegraphics[width=0.9 \textwidth]{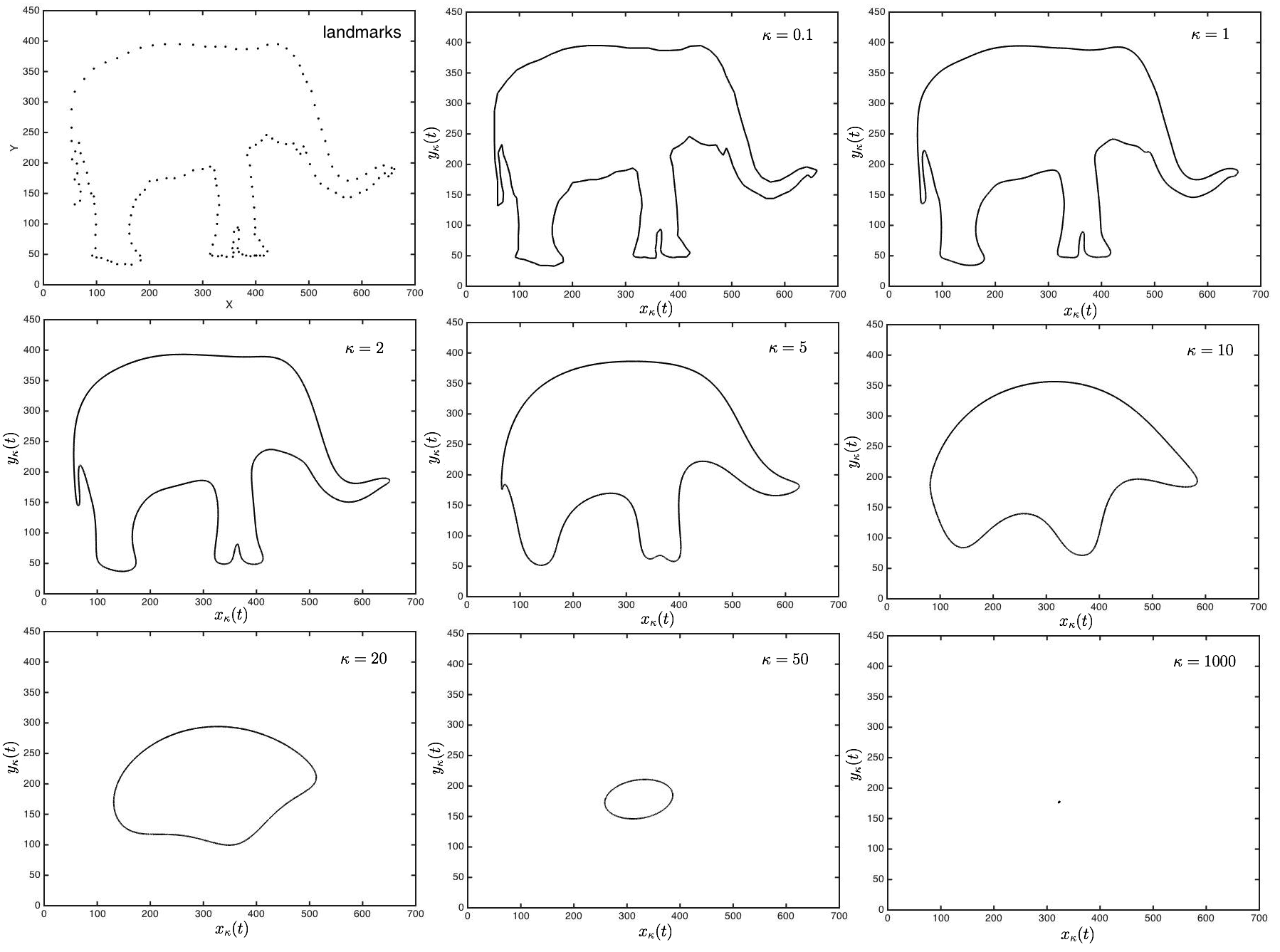}
\caption{\scriptsize{Plots of $y_{\kappa}(t)$ vs. $x_{\kappa}(t)$ obtained from Eqs. (\ref{2Dtype2x}) and (\ref{2Dtype2y})) for the landmarks shown in the top leftmost panel and the values of $\kappa$ in the panels. The $\kappa$-family of shapes contains the silhouette of the elephant and the centroid of the landmarks as its members (in the limits $\kappa \to 0$ and $\kappa \to \infty$ respectively) as well as an infinite family of shapes that qualitatively interpolate between both limiting cases.}} \label{eleph}
\end{center}
\end{figure*}

It is interesting to compare the Fourier method of \cite{Kuhl} with ours (embodied in Eqs. (\ref{2Dtype2x}) and (\ref{2Dtype2y})) since: \emph{1) our method does not require the evaluation of any coefficient since these are the landmarks themselves and $x_{\kappa}(t)$ and $y_{\kappa}(t)$ are linear functions of them; 2) the expansions in our method are always finite and equal to the finite number of landmarks also for shapes with vertices and sharp edges; 3) between landmarks the interpolation is always smooth and there are no oscillations; 4) no chain code \cite{Freeman, Bribiesca} is needed as in \cite{Kuhl} but merely the coordinates of the landmarks in successive order;  5)  the method is completely straightforward and an infinite family of qualitatively related shapes is obtained by continuously varying $\kappa$ between 0 and $\infty$}.

We note that only one parameter $\kappa$ is necessary to fit the data in our method, the $\epsilon$-shape being this fitting. It has been suggested that, indeed only one parameter is necessary to fit any arbitrary collection of data \cite{Piantadosi}. However, the fitting in \cite{Piantadosi} is extremely sensitive to the value of the parameter. The shapes obtained with our method have not this sensitivity: varying $\kappa$ continuously produces a set of qualitatively related shapes and if changes in $\kappa$ are small, the changes in shape also are. 

In Fig. \ref{eleph} we show a fitting of Eqs. (\ref{2Dtype2x}) and (\ref{2Dtype2y})) to the shape of an elephant. The 160 landmark points $(X_{j}, Y_{j})$, $j=0,\ldots 159$ are chosen in the contour of the elephant. Sufficiently small $\kappa$ values produce an excellent fit of the elephant. As $\kappa$ is increased, the closed shape gradually shrinks to the centroid of the landmarks. This centroid coincides with the zero Fourier mode in the Fourier expansion method of \cite{Kuhl}.  It is, however, to be noted that to apply the Fourier method first the shape must be traced with rectilinear segments \cite{Kuhl} connecting landmarks by a chain code \cite{Freeman, Bribiesca}. After that, Fourier coefficients need to be calculated up to a certain number and inserted in the Fourier expansions to yield the parametric equations of the shape. Our method only needs the landmarks directly extracted from the contour and these lead directly to the parametric equations of the shape thanks to the corresponding nonlinear $\mathcal{B}_{\kappa}$-embeddings.

In the following we shall write the landmarks $(X_0, Y_0)$, $(X_1, Y_1)$, $\ldots$, $(X_{N-1}, Y_{N-1})$ more compactly in a rectangular matrix as 
\begin{equation}
\left(
\begin{array}{cccc}
X_0  & X_1  & \ldots & X_{N-1}    \\
Y_0  & Y_1  & \ldots & Y_{N-1}
\end{array}
\right)
\end{equation}
The order of the landmarks and whether these are repeated or not is a defining property of the resulting $\kappa$-family of shapes. In Fig. \ref{polyg}A we show four shapes of the $\kappa$-family obtained from the landmarks
\begin{equation}
\left(
\begin{array}{ccc}
0  & 4  & 7    \\
0  & 3  & 1 \label{A}
\end{array}
\right)
\end{equation}
and for the values $\kappa=0.01$ (black curve), $\kappa=0.3$ (blue curve), $\kappa=0.5$ (red curve) and $\kappa=1$ (green curve).
By introducing the landmarks in Eqs. (\ref{2Dtype2x}) and (\ref{2Dtype2y})) we obtain the mathematical model in terms of parametric curves for the whole $\kappa$-family
\begin{eqnarray}
x_{\kappa}(t)&=&\frac{4\Pi_{\kappa}\left(t-1,\frac{1}{2}; 3\right)+7\Pi_{\kappa}\left(t-2,\frac{1}{2}; 3\right)}{\sum_{j=0}^{2}\Pi_{\kappa}\left(t-j,\frac{1}{2}; 3\right)} \quad \label{trianX}  \\
y_{\kappa}(t)&=&\frac{3\Pi_{\kappa}\left(t-1,\frac{1}{2}; 3\right)+\Pi_{\kappa}\left(t-2,\frac{1}{2}; 3\right)}{\sum_{j=0}^{2}\Pi_{\kappa}\left(t-j,\frac{1}{2}; 3\right)} \quad \label{trianY} \\ && \qquad \qquad \qquad \qquad \qquad \qquad \qquad \qquad t\in \mathbb{R}. \nonumber
\end{eqnarray}
We see that the $\epsilon$-shape is a triangle whose vertices are the landmarks. 
 As $\kappa$ is increased, shapes with broader round edges appear and gradually shrink to the barycenter of the triangle as $\kappa$ is continuously increased to infinity. Since Eqs. (\ref{trianX}) and (\ref{trianY}) are invariant under the transformations $t \to t+3k$ $k\in \mathbb{Z}$ we find that any cyclic permutation of the landmarks yields the same $\kappa$-family of shapes, i.e. the choices of  landmarks
 \begin{equation}
\left(
\begin{array}{ccc}
7  & 0  & 4    \\
1  & 0  & 3
\end{array}
\right) \quad  \left(
\begin{array}{ccc}
 4  & 7 & 0   \\
 3  & 1 & 0
\end{array}
\right)
\end{equation}
correspond to the same $\kappa$-family. 
 
\begin{figure*}
\begin{center}
\includegraphics[width=0.9 \textwidth]{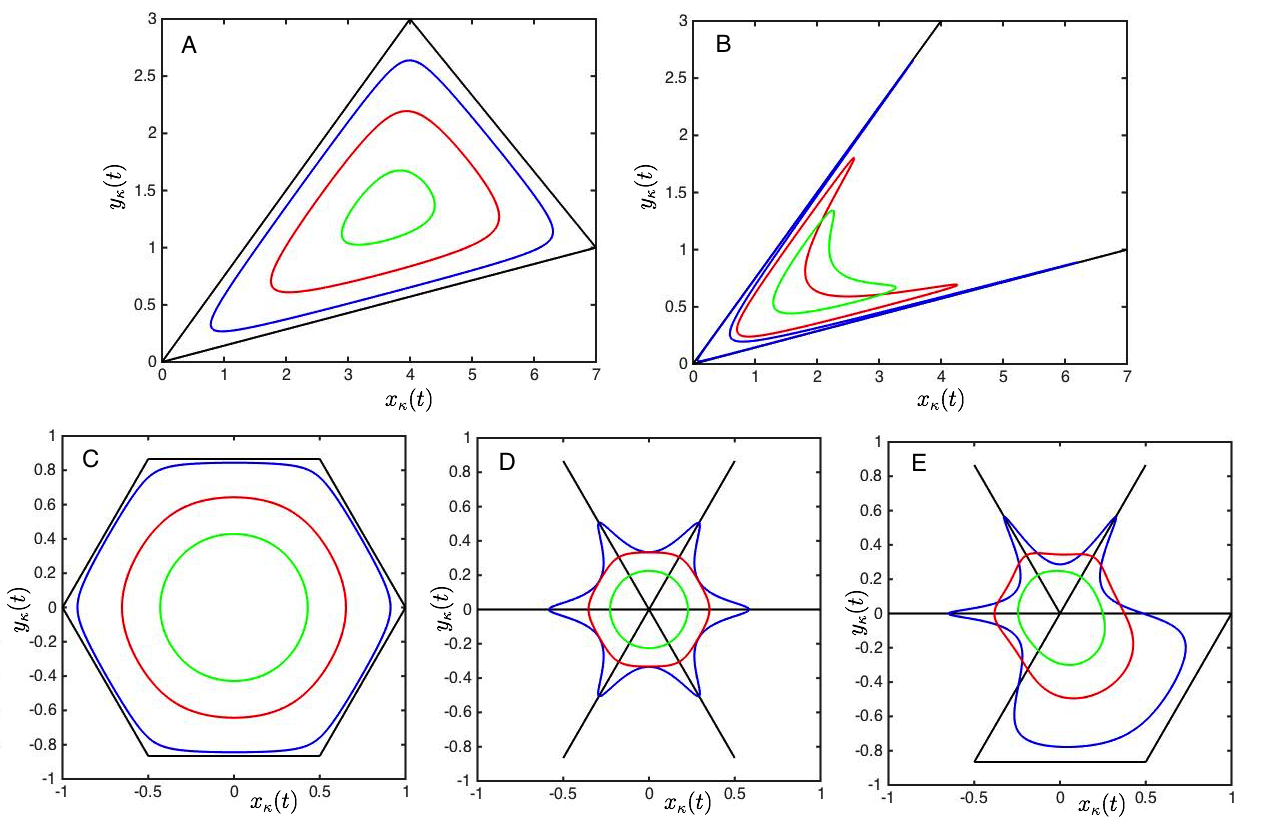}
\caption{\scriptsize{Plots of $y_{\kappa}(t)$ vs. $x_{\kappa}(t)$ obtained from Eqs. (\ref{2Dtype2x}) and (\ref{2Dtype2y})) for the landmarks given by (A) Eq. (\ref{A}), (B) Eq. (\ref{B}),(C) Eq. (\ref{C}) with $P=6$, (D) Eq. (\ref{D}) with $P=6$ and (E) Eq. (\ref{E}). The values of $\kappa$ are $\kappa=0.01$ (black curves), $\kappa=0.3$ (blue curves), $\kappa=0.5$ (red curves) and $\kappa=1$ (green curves). }} \label{polyg}
\end{center}
\end{figure*} 
 
If some of the landmarks are repeated, the resulting $\kappa$-family is different, even when parts of the outline can be the same. It is clear why: repeating a landmark makes it to acquire more weight therefore displacing the centroid. For example, if we consider the landmarks
\begin{equation}
\left(
\begin{array}{ccccc}
0  & 4  & 0 & 7 & 0   \\
0  & 3  & 0 & 1 & 0 \label{B}
\end{array}
\right)
\end{equation}
the landmark $(0,0)$ has thrice the weight of the other landmarks. Furthermore, the connection between landmarks $(4, 3)$ and $(7,1)$ dissappears.  From Eqs. (\ref{2Dtype2x}) and (\ref{2Dtype2y})), the mathematical model of the resulting $\kappa$-family is, 
\begin{eqnarray}
x_{\kappa}(t)&=&\frac{4\Pi_{\kappa}\left(t-1,\frac{1}{2}; 5\right)+7\Pi_{\kappa}\left(t-3,\frac{1}{2}; 5\right)}{\sum_{j=0}^{4}\Pi_{\kappa}\left(t-j,\frac{1}{2}; 5\right)}  \label{BtrianX} \\
y_{\kappa}(t)&=&\frac{3\Pi_{\kappa}\left(t-1,\frac{1}{2}; 5\right)+\Pi_{\kappa}\left(t-3,\frac{1}{2}; 5\right)}{\sum_{j=0}^{4}\Pi_{\kappa}\left(t-j,\frac{1}{2}; 5\right)}  \label{BtrianY}\\ && \qquad\qquad\qquad\qquad\qquad\qquad\qquad\qquad t\in \mathbb{R}.\nonumber
\end{eqnarray}
In Fig. \ref{polyg}B the curves obtained from Eqs.(\ref{BtrianX}) and (\ref{BtrianY}) are plotted for $\kappa=0.01$ (black curve), $\kappa=0.3$ (blue curve), $\kappa=0.5$ (red curve) and $\kappa=1$ (green curve). It is observed that the $\epsilon$-shape in this case is no longer a triangle because the connection between the points $(4,3)$ and $(7,1)$ is missing. Because of the higher weight of the origin, the shapes shrink to a point that is closer to the origin than the barycenter of the triangle in Fig \ref{polyg}A.
 
Any regular polygon defines a shape in which the vertices can be taken as landmarks of a $\kappa$-family of shapes. Therefore, a polygon of $P$ sides has landmarks
\begin{equation}
\left(
\begin{array}{ccccc}
\cos \left(\frac{2\pi}{P} \right)  & \cos \left(\frac{4\pi}{P} \right)  & \ldots & \cos \left(\frac{2\pi (P-1)}{P} \right) &\cos \left(\frac{2\pi P}{P} \right)    \\
\sin \left(\frac{2\pi}{P} \right)  & \sin \left(\frac{4\pi}{P} \right)  & \ldots & \sin \left(\frac{2\pi (P-1)}{P} \right) & \sin \left(\frac{2\pi P}{P} \right)
\end{array}
\right) \label{C}
\end{equation}
and Eqs. (\ref{2Dtype2x}) and (\ref{2Dtype2y}) become
\begin{eqnarray}
x_{\kappa}(t)&=&\frac{\sum_{j=0}^{P-1}\cos \left(\frac{2\pi (j+1)}{P} \right) \Pi_{\kappa}\left(t-j,\frac{1}{2}; P\right)}{\sum_{j=0}^{P-1}\Pi_{\kappa}\left(t-j,\frac{1}{2}; P\right)}, \label{polyx} \\
y_{\kappa}(t)&=&\frac{\sum_{j=0}^{P-1}\sin \left(\frac{2\pi (j+1)}{P} \right) \Pi_{\kappa}\left(t-j,\frac{1}{2}; P\right)}{\sum_{j=0}^{P-1}\Pi_{\kappa}\left(t-j,\frac{1}{2}; P\right)}\label{polyy} \\ &&\qquad \qquad\qquad \qquad\qquad \qquad\qquad \qquad  t\in \mathbb{R}  \nonumber
\end{eqnarray}
In Fig. \ref{polyg}C the curves obtained from these equations are plotted for $P=6$,  $\kappa=0.01$ (black curve), $\kappa=0.3$ (blue curve), $\kappa=0.5$ (red curve) and $\kappa=1$ (green curve). It is observed that as $\kappa$ is increased, the shapes approach circumferences. This is obvious: the regularity of the polygon obtained in the limit $\kappa \to 0$ (the $\epsilon$-shape), shrinks to the center of the polygon and, as $\kappa$ is increased the shape becomes uniformized because the details of the vertices are lost. We see that in increasing $\kappa$ more symmetric shapes are generally obtained since one passes from objects with discrete symmetries described by dihedral groups to the continuous symmetry of the circumference.

If we join each vertex of the polygon to the center we also construct shapes 
involving the same symmetry groups but with the appearance of stars. The landmarks are now
 \begin{equation}
\left(
\begin{array}{ccccccc}
0 & \cos \left(\frac{2\pi}{P} \right)  & 0 & \cos \left(\frac{4\pi}{P} \right)  & \ldots  &0 & \cos \left(\frac{2\pi P}{P} \right)    \\
0& \sin \left(\frac{2\pi}{P} \right)  & 0& \sin \left(\frac{4\pi}{P} \right)  & \ldots &  0& \sin \left(\frac{2\pi P}{P} \right)
\end{array} 
\right) \label{D}
\end{equation}
\begin{figure*}
\begin{center}
\includegraphics[width=0.7 \textwidth]{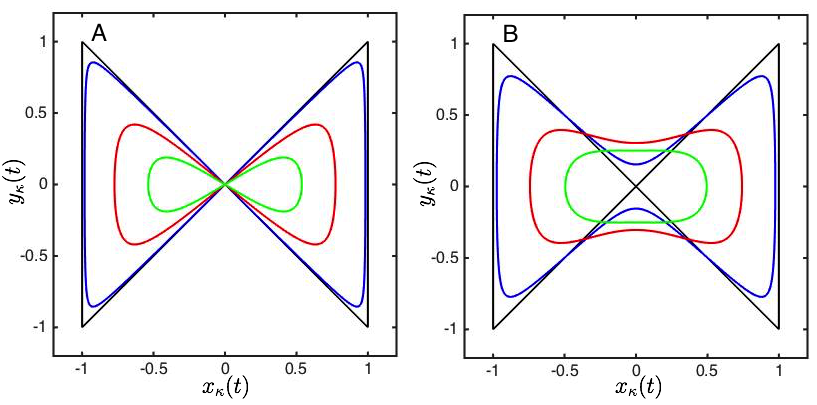}
\caption{\scriptsize{Plots of $y_{\kappa}(t)$ vs. $x_{\kappa}(t)$ obtained from Eqs. (\ref{2Dtype2x}) and (\ref{2Dtype2y}) for the landmarks given by (A) Eq. (\ref{lem1}) and (B) (\ref{lem2})). The values of $\kappa$ are $\kappa=0.01$ (black curves), $\kappa=0.3$ (blue curves), $\kappa=0.5$ (red curves) and $\kappa=1$ (green curves).}} \label{lems}
\end{center}
\end{figure*}

Eqs. (\ref{2Dtype2x}) and (\ref{2Dtype2y}) now become
\begin{eqnarray}
x_{\kappa}(t)&=&\frac{\sum_{j=0}^{P-1}\cos \left(\frac{2\pi (j+1)}{P} \right) \Pi_{\kappa}\left(t-2j-1,\frac{1}{2}; 2P\right)}{\sum_{j=0}^{2P-1}\Pi_{\kappa}\left(t-j,\frac{1}{2}; 2P\right)} \qquad \label{polyx} \\
y_{\kappa}(t)&=&\frac{\sum_{j=0}^{P-1}\sin \left(\frac{2\pi (j+1)}{P} \right) \Pi_{\kappa}\left(t-2j-1,\frac{1}{2}; 2P\right)}{\sum_{j=0}^{2P-1}\Pi_{\kappa}\left(t-j,\frac{1}{2}; 2P\right)} \qquad \label{polyy} \\&& \qquad \qquad \qquad \qquad\qquad \qquad\qquad \qquad t\in \mathbb{R} \nonumber
\end{eqnarray}
In Fig. \ref{polyg}D the curves obtained from these equations are plotted for $P=6$,  $\kappa=0.01$ (black curve), $\kappa=0.3$ (blue curve), $\kappa=0.5$ (red curve) and $\kappa=1$ (green curve). It is observed that as $\kappa$ is increased, the shapes approach again circumferences. However, for $\kappa$ sufficiently small, the shapes are not convex and their starry appearance resemble the one of certain species of \emph{Echinoderma}.

The shapes found within the $\kappa$-families and their deformations can be controlled by means of appropriately selected and weighted landmarks. Repeating a landmark after itself does not change the contour of the $\epsilon$-shape but has the effect, as $\kappa$ is increased, of drawing the shapes to a point nearer to the repeated landmark: the direction and amount of deformation within the shapes of a $\kappa$-family as $\kappa$ is increased can thus be \emph{absolutely} controlled. 

Any connected planar graph can be used as a structure of landmarks or $\epsilon$-shape. In Fig. \ref{polyg}E the same landmarks as in C and D are used but repeated and connected diferently, as the $\epsilon$-shape reveals. The $N=10$ landmarks are now
\begin{eqnarray}
&&\left(
\begin{array}{cccccccc}
0 & \cos \left(\frac{2\pi}{6} \right)  & 0 & \cos \left(\frac{4\pi}{6} \right)  & 0
& \cos \left(\frac{6\pi}{P} \right) & 0 & \cos \left(\frac{8\pi}{6} \right)  \\
0 & \sin \left(\frac{2\pi}{6} \right)  & 0 & \sin \left(\frac{4\pi}{6} \right)  & 0
& \sin \left(\frac{6\pi}{P} \right) & 0 & \sin \left(\frac{8\pi}{6} \right) 
\end{array}
\right.  \nonumber \\
&&\qquad \qquad \qquad \left.
\begin{array}{cc}
 \\
\cos \left(\frac{10\pi}{6} \right) & \cos \left(\frac{12\pi}{6} \right) \\
\sin \left(\frac{10\pi}{6} \right) & \sin \left(\frac{12\pi}{6} \right)
\end{array}
\right)
\label{E}
\end{eqnarray}
and Eqs. (\ref{2Dtype2x}) and (\ref{2Dtype2y}) yield shapes as those in Fig. \ref{polyg}E.

\begin{figure*}
\begin{center}
\includegraphics[width=0.7 \textwidth]{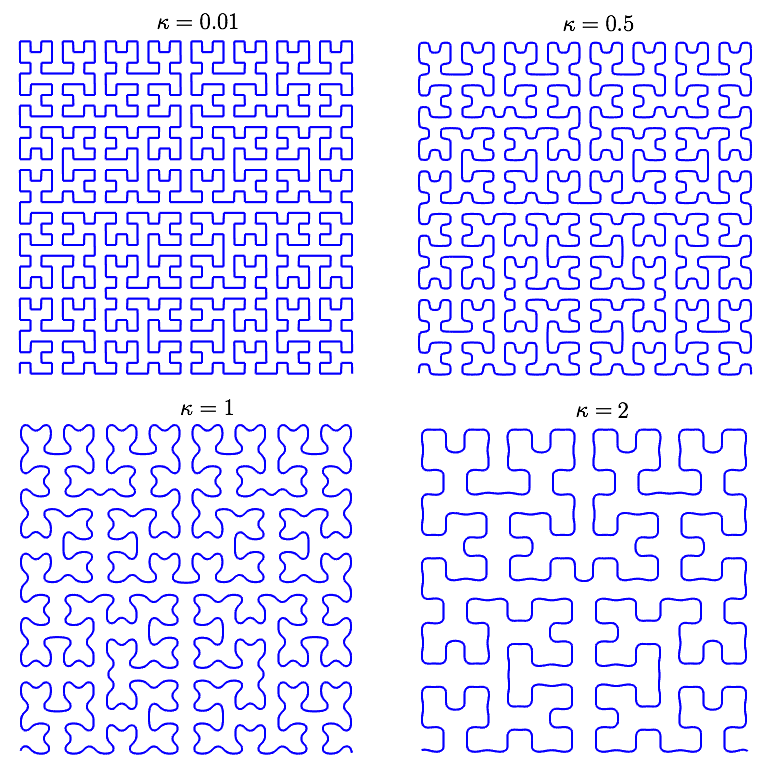}
\caption{\scriptsize{Plots of $y_{\kappa}(t)$ vs. $x_{\kappa}(t)$ within the unit square (axes not shown) obtained from Eqs. (\ref{2Dtype1x}) and (\ref{2Dtype1y}) for the landmarks obtained after $Q=5$ iterations of the transformation in Eq. (\ref{hil1}). The different values of $\kappa$ are shown on the panels.}} \label{hil1}
\end{center}
\end{figure*}

\begin{figure*}
\begin{center}
\includegraphics[width=0.7 \textwidth]{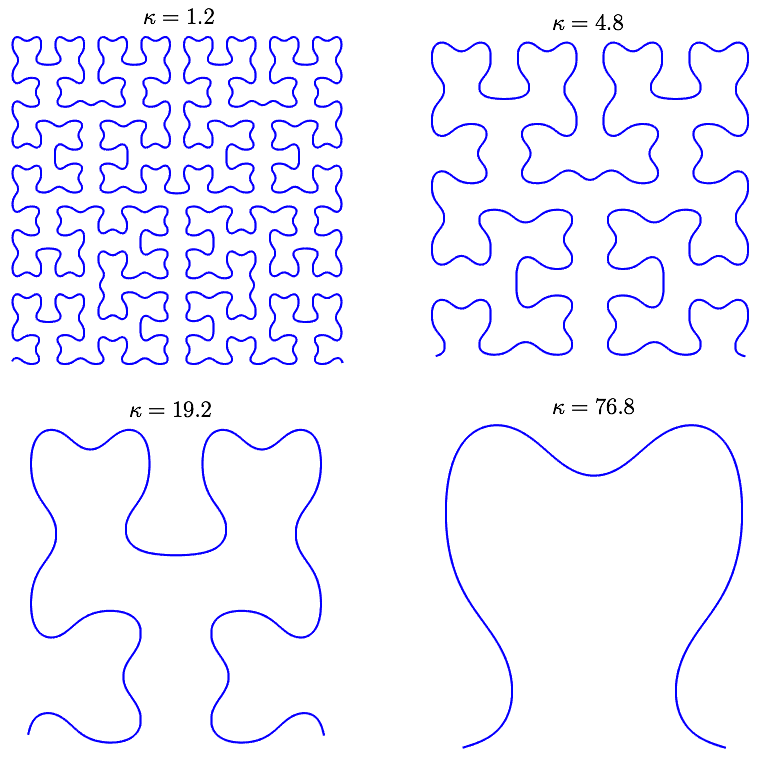}
\caption{\scriptsize{Plots of $y_{\kappa}(t)$ vs. $x_{\kappa}(t)$ within the unit square (axes not shown) obtained from Eqs. (\ref{2Dtype1x}) and (\ref{2Dtype1y}) for the landmarks obtained after $Q=5$ iterations of the transformation in Eq. (\ref{hil1}). The different values of $\kappa$ are shown on the panels. Each successive value of $\kappa$ is four times larger than the previous one from left to right and top to bottom.}} \label{hil2}
\end{center}
\end{figure*}

Let us assume that we do not know the parametric equations of a Bernoulli lemniscate and we find a plot of such a curve and want a mathematical model for it. Because of its symmetry we place its centroid at the origin. 
 We can then roughly select the vertices of a square as landmarks whose center of mass is the origin and such that the periodic $\epsilon$-shape crosses the origin two times through the diagonals of the square
\begin{equation}
\left(
\begin{array}{cccc}
-1  & 1  & 1 & -1    \\
-1  & 1  & -1 & 1 \label{lem1}
\end{array}
\right)
\end{equation}
These crossings occur in going from landmark $(-1, \ -1)$ to $(1, \ 1)$ and in going from  $(1, \ -1)$ to $(-1, \ 1)$. The landmarks given by Eq. (\ref{lem1}) can be replaced in Eqs. (\ref{2Dtype2x}) and (\ref{2Dtype2y})) yielding a mathematical model for the entire $\kappa$-family of shapes corresponding to these landmarks. In Fig. \ref{lems}A we plot $y_{\kappa}(t)$ vs. $x_{\kappa}(t)$ obtained from these equations for several $\kappa$ values. We observe that the $\kappa=1$ curve (green) reasonably approximates a lemniscate. 

Equal $\epsilon$-shapes may belong to different $\kappa$-families. If we now consider the landmarks
\begin{equation}
\left(
\begin{array}{cccccc}
-1  & 0 & 1  &  1 & 0 & -1    \\
-1  & 0 & -1  & 1 & 0 & 1 \label{lem2}
\end{array}
\right)
\end{equation}
the $\epsilon$-shape is identical as with the landmarks in Eq. (\ref{lem1}) but as $\kappa$ is varied, the shapes are different because they do not longer contain the origin: the latter is not a point where the shape crosses itself. This is shown in Fig. \ref{lems}B: the $\epsilon$-shape is the same as in Fig. \ref{lems}A but the $\kappa$-family is different, as it is revealed when $\kappa$ is increased. 

\subsection{Fractal shapes in 2D: Space-filling curves}

Landmarks can also be specified recursively by means of cellular automata \cite{VGM1, VGM2, VGM3, VGM7, Wolfram}  substitution and Lindenmayer systems \cite{VGM4,Wolfram,Prusinkiewicz} and digit replacement techniques \cite{CHAOSOLFRAC, VGM5,VGM6}. These procedures, when acting on real numbers, naturally lead to fractal structures \cite{CHAOSOLFRAC,VGM5,VGM6}.

A famous example of a fractal curve in the plane is Hilbert's curve \cite{Hilbert}. This curve, which constitutes an open shape, can be regarded as a substitution system. Starting with the landmark $X_0=0$, $Y_0=0$, we iteratively replace each previous landmark by four new landmarks following the transformation
\begin{equation}
\left(
\begin{array}{c}
X_{j}     \\
Y_{j} 
\end{array}
\right) \to 
\left(
\begin{array}{cccc}
\frac{Y_{j}-1}{2}  & \frac{X_{j}-1}{2} & \frac{X_{j}+1}{2} & \frac{1-Y_{j}}{2}    \\
\frac{X_{j}-1}{2}  & \frac{Y_{j}+1}{2}  & \frac{Y_{j}+1}{2}  & \frac{-1-X_{j}}{2} \label{hilbert}
\end{array}
\right)
\end{equation}
Let $Q$ denote the number of iterations. For $Q=0$ we have only as landmark the origin (initialization). For $Q=1$ we have the $N=4$ landmarks
\begin{equation}
\left(
\begin{array}{cccc}
-\frac{1}{2}  & -\frac{1}{2} & \frac{1}{2} & \frac{1}{2}    \\
-\frac{1}{2}  & \frac{1}{2}  & \frac{1}{2}  & -\frac{1}{2} \label{hilbertQ1}
\end{array}
\right)
\end{equation}
For $Q=2$ we have $4^2=16$ landmarks which are obtained replacing in order each of these landmarks following the prescription in Eq. (\ref{hilbert}). For $Q=n$ we obtain by this process, $4^{n}$ landmarks. Hilbert's curve is obtained by joining these landmarks by straight line segments.

By using Eqs. (\ref{2Dtype1x}) and (\ref{2Dtype1y}) we can now construct a $\kappa$-family of fractal curves having Hilbert's curve as $\epsilon$-shape. In Fig. \ref{hil1} we represent the curves calculated from Eqs. (\ref{2Dtype1x}) and (\ref{2Dtype1y}) for the 1024 landmarks obtained after $Q=5$ iterations and the values of $\kappa$ indicated on the panels. When $\kappa$ is vanishingly small, the $\epsilon$-shape (Hilbert's curve after $Q=5$ iterations) is approached. When $\kappa$ increases, the smallest details are averaged out and all straight edges and vertices become replaced by curved contours. As $\kappa$ increases further, 
we observe the shapes that we would have obtained with $Q=3$ iterations. Increasing $\kappa$ further then seems like `going backwards in the iterative process'. In the limit $\kappa \to \infty$ the shape collapses to its centroid, which corresponds to $Q=0$ iterations.

In Fig. \ref{hil2} we better illustrate this phenomenon. We observe that the curves are self-similar for any value of $\kappa$: By multiplying $\kappa$ by 4 we move backwards in the iteration process, obtaining the same shapes as if they were zoomed in, or as if they were obtained by the landmarks after $Q=5$, $Q=4$, $Q=3$ and $Q=2$ iterations (looking at the panels from left to right and top to bottom). We note that the $\mathcal{B}_{\kappa}$ function has a scaling property
\begin{equation}
\mathcal{B}_{a\kappa}(x,y)=\mathcal{B}_{\kappa}(ax,ay)
\end{equation}
This property is responsible for the phenomenon observed. We thus note that the smoothing parameter $\kappa$ is really a \emph{scale} parameter: as it is increased it captures the features of a shape in a more broader scale. Since the curves are self-similar we observe the same kind of structures as $\kappa$ is increased. 

\subsection{3D shapes}

In three dimensions we have the vector $\mathbf{r}_{\kappa}(t)=(x_{\kappa}(t), y_{\kappa}(t), z_{\kappa}(t))$ that yields a plane curve in parametric form and Eq. (\ref{type1}) for open shapes reduces to
\begin{eqnarray}
x_{\kappa}(t)&=&\sum_{j=0}^{N-1}X_{j}\frac{\mathcal{B}_{\kappa}\left(t-j,\frac{1}{2}\right)}{\mathcal{B}_{\kappa}\left(t-\frac{N-1}{2},\frac{N}{2}\right)} \label{3Dtype1x} \\
y_{\kappa}(t)&=&\sum_{j=0}^{N-1}Y_{j}\frac{\mathcal{B}_{\kappa}\left(t-j,\frac{1}{2}\right)}{\mathcal{B}_{\kappa}\left(t-\frac{N-1}{2},\frac{N}{2}\right)}  \label{3Dtype1y} \\ 
z_{\kappa}(t)&=&\sum_{j=0}^{N-1}Z_{j}\frac{\mathcal{B}_{\kappa}\left(t-j,\frac{1}{2}\right)}{\mathcal{B}_{\kappa}\left(t-\frac{N-1}{2},\frac{N}{2}\right)}  \label{3Dtype1z} \\ 
&& \qquad \qquad \qquad \qquad t\in [0,N-1], t\in \mathbb{R} \nonumber
\end{eqnarray}
where $(X_j, Y_j, Z_j)=\mathbf{r}_{j}$ specifies the $j$-th landmark. 

\begin{figure*}
\begin{center}
\includegraphics[width=0.8 \textwidth]{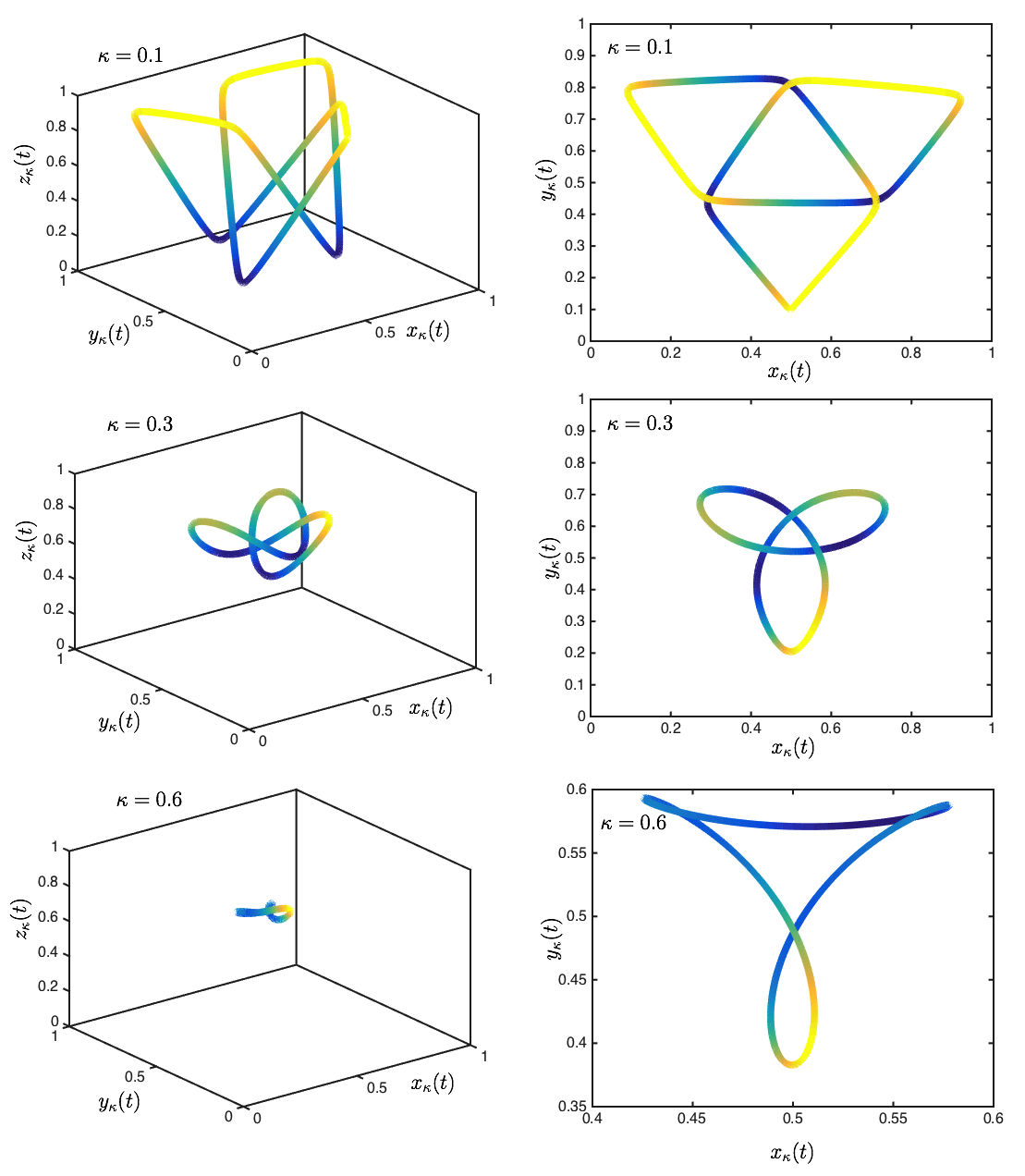}
\caption{Plots of $z_{\kappa}(t)$ vs. $y_{\kappa}(t)$ and $x_{\kappa}(t)$ of the curves obtained from Eqs. (\ref{3Dtype2x}) to (\ref{3Dtype2z}) for the landmarks given in Eq. (\ref{knolan}) and the values of $\kappa$ indicated on the figures. In the right column, the shapes are also projected on the $x_{\kappa}-y_{\kappa}$ plane for better visualization. The color code is related to the height $z_{\kappa}$ and ranges from dark blue ($z_{\kappa}=0$) to yellow ($z_{\kappa}=1$).} \label{knots}
\end{center}
\end{figure*}

For closed shapes in the plane Eq. (\ref{type2}) is equivalent to
\begin{eqnarray}
x_{\kappa}(t)&=&\frac{\sum_{j=0}^{N-1}X_{j}\Pi_{\kappa}\left(t-j,\frac{1}{2}; N\right)}{\sum_{j=0}^{N-1}\Pi_{\kappa}\left(t-j,\frac{1}{2}; N\right)} \label{3Dtype2x} \\ 
y_{\kappa}(t)&=&\frac{\sum_{j=0}^{N-1}Y_{j}\Pi_{\kappa}\left(t-j,\frac{1}{2}; N\right)}{\sum_{j=0}^{N-1}\Pi_{\kappa}\left(t-j,\frac{1}{2}; N\right)}  \label{3Dtype2y}
\\ 
z_{\kappa}(t)&=&\frac{\sum_{j=0}^{N-1}Z_{j}\Pi_{\kappa}\left(t-j,\frac{1}{2}; N\right)}{\sum_{j=0}^{N-1}\Pi_{\kappa}\left(t-j,\frac{1}{2}; N\right)}   \qquad t\in \mathbb{R} \quad \label{3Dtype2z}
\end{eqnarray}
The landmarks $(X_0, Y_0, Z_0)$, $(X_1, Y_1,Z_1)$, $\ldots$, $(X_{N-1}, Y_{N-1}, Z_{N-1})$ can be written more compactly in a rectangular matrix as 
\begin{equation}
\left(
\begin{array}{cccc}
X_0  & X_1  & \ldots & X_{N-1}    \\
Y_0  & Y_1  & \ldots & Y_{N-1}    \\
Z_0  & Z_1  & \ldots & Z_{N-1}
\end{array}
\right)
\end{equation}

As an example of 3D curve, we provide a mathematical model for a trefoil knot, a closed curve which features prominently in the Lorenz attractor \cite{Kaufman}. The landmarks of any knot can be found from its planar representation by selecting points on its contour. If a point lags at the bottom (resp. at the front) in the planar representation, a value of $Z=0$ (resp. $Z=1$) is adjoined to the coordinates $X$ and $Y$ read from the planar representation. We can pick, e.g. $N=9$ landmarks in this way, outlining the periodic contour of the shape
\begin{equation}
\left(
\begin{array}{cccccccccc}
0.5 & 0.3  & 0.5 & 1 & 0.7 & 0.3 & 0 & 0.5 & 0.7   \\
0 & 0.4 & 0.8 & 0.8 & 0.4 & 0.4 & 0.8 & 0.8 & 0.4    \\
1 & 0 & 1 & 1 & 0 & 1 & 1 & 0 & 1  \label{knolan}
\end{array}
\right).
\end{equation}
Eqs. (\ref{3Dtype2x}) to (\ref{3Dtype2z}) automatically provide a model for the $\kappa$-family of closed shapes derived from these landmarks. In Fig. \ref{knots} we plot several shapes that are members of this family. The shape of the trefoil knot shrinks towards its centroid. At $\kappa > 0.55$ aprox. the knot undoes and, therefore a knot/unknot transition is observed. Complemented with structural, biological information, the mathematical methods here presented may thus be of interest in the modelling of knot/unknot transitions found in macromolecules and DNA \cite{Wasserman, Liu}.

\subsection{$\kappa$-family of shapes from Lorenz' attractor} \label{Lorenz}

The celebrated Lorenz system
\begin{eqnarray}
\dot{x}&=&\sigma (y-x) \label{lor1} \\
\dot{y}&=&x(\rho -z)-y \label{lor2} \\
\dot{z}&=&xy-\beta z \label{lor3} 
\end{eqnarray}
can be embedded in a $\kappa$-family of shapes as described in Sec. \ref{dynapp}. The landmarks $\mathbf{r}_{j}:=(X_j, Y_j, Z_j)$ can be iteratively obtained from Eq. (\ref{evolsmooth}) as
\begin{eqnarray}
X_j&=&X_{j-1}+\delta \sigma \left(Y_{j-1}-X_{j-1}\right) \label{lor1D} \\
Y_j&=&Y_{j-1}+\delta \left[X_{j-1}\left(\rho -Z_{j-1}\right)-Y_{j-1}\right] \label{lor2D} \\
Z_j&=&Z_{j-1}+\delta \left[X_{j-1}Y_{j-1}-\beta Z_{j-1} \right]        \label{lor3D} 
\end{eqnarray}
starting from an initial condition $(X_0, Y_0, Z_0)$. In Fig. \ref{loratrac} $z_{\kappa}(t)$ is shown together with $y_{\kappa}(t)$ and $x_{\kappa}(t)$ obtained from Eqs. (\ref{3Dtype1x}) to (\ref{3Dtype1z}) and the landmarks given by Eqs. (\ref{lor1D}) to (\ref{lor3D}) for the initial condition $(X_0, Y_0, Z_0)=(2.5704,\ 3.6945,\ 16.4286)$, $N=1000$, $\delta=0.01$, $\rho=28$, $\sigma=10$, $\beta=8/3$ and the values of $\kappa$ indicated over the panels. It is observed that as $\kappa$ is decreased to 0, the $\epsilon$-shape approaches that of the Lorenz attractor. When $\kappa$ is increased, deformed complex attractors are obtained. The latter shrink to a point as $\kappa \to \infty$. The resulting, deformed attractors, for $\kappa >0$ finite are always found within the convex hull of the landmarks obtained from the Lorenz system.
 
\begin{figure*}
\begin{center}
\includegraphics[width=0.9 \textwidth]{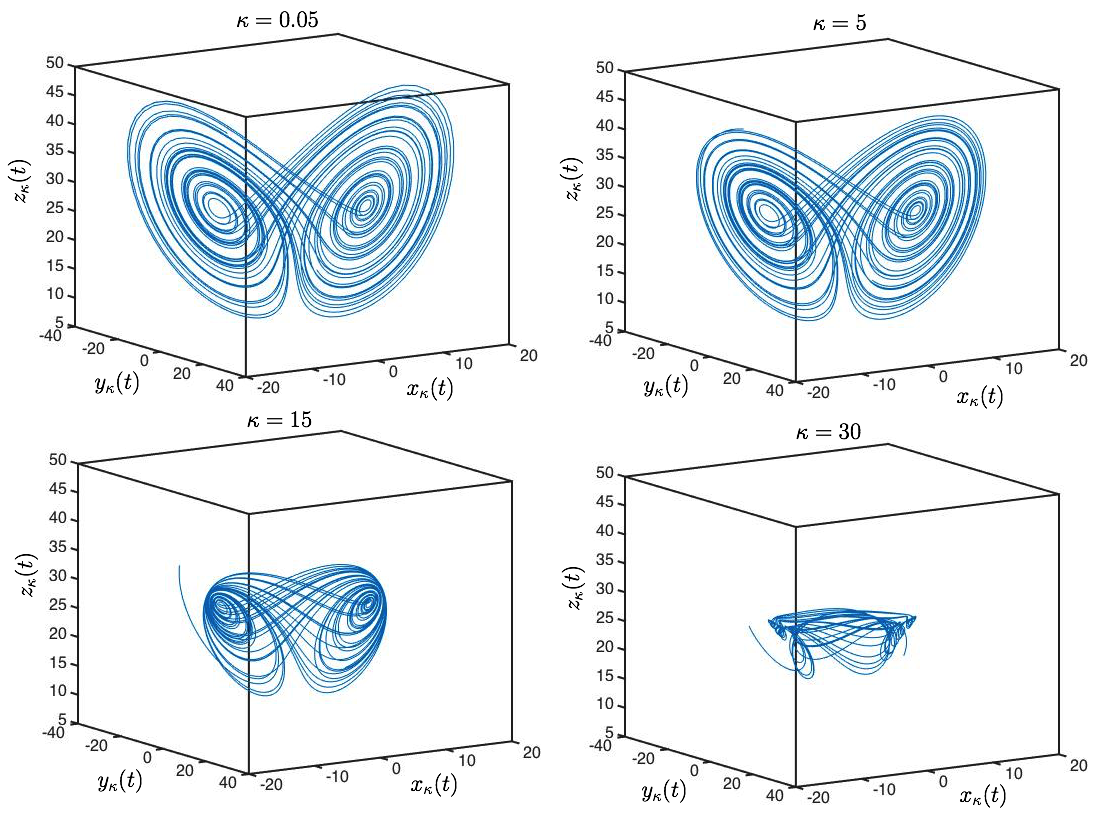}
\caption{Plots of $z_{\kappa}(t)$ vs. $y_{\kappa}(t)$ and $x_{\kappa}(t)$ obtained from Eqs. (\ref{3Dtype1x}) to (\ref{3Dtype1z}) and the landmarks given by Eqs. (\ref{lor1D}) to (\ref{lor3D}) for the initial condition $(X_0, Y_0, Z_0)=(2.5704,\ 3.6945,\ 16.4286)$, $N=1000$, $\delta=0.01$, $\rho=28$, $\sigma=10$, $\beta=8/3$ and the values of $\kappa$ indicated over the panels.} \label{loratrac}
\end{center}
\end{figure*}

\section{Conclusions} \label{conclusions}

In this article, a general theory of shape has been presented and illustrated with examples. The theory provides mathematical models for any shape that can be found in the physical world and requires the specification of a finite set of landmarks that fully characterize entire families of qualitatively related shapes. The theory addresses all shapes that are path connected (all shapes of interest in physics, engineering and the life sciences) although it may be extended (by introducing additional parameters) to other topologically interesting curves that are not path connected (as e.g. the Warsaw circle). Disconnected shapes that are composed of path connected pieces can be treated within the theory as \emph{scenes} (compound shapes).

The theory of shape developed in this article has been linked to viability theory \cite{Aubin3, Aubin1, Aubin2} and invariant sets of dynamical systems \cite{Blanchini1, Blanchini2}. When a shape is looked upon as a trajectory of a complex dynamical system (possibly under the influence of unpredictable perturbations) infinite families of viable trajectories can be constructed for the dynamical system, from the knowledge of a finite number of points (landmarks) on a measured trajectory.  All viable trajectories generated as $\kappa$-families of shapes are found in any convex set of the landmarks and, specifically, in the convex hull of the latter (the convex set of minimal size).

The mathematical models for the shapes correspond to nonlinear $\mathcal{B}_{\kappa}$-embeddings \cite{JPHYSCOMPLEX}. The latter structures are nonlinear functions of the time $t$ and deformation $\kappa$ parameters, but are linear functions of the landmarks. This makes straightforward the application of transformations to the shapes (e.g. translations, rotations, rescalings).
Specifically, procrustean analysis can easily be carried on the $\kappa$-families of shapes, after displacing the centroid to the origin and rescaling the shapes appropriately \cite{Kendall1, Kendall2}. The former operation is carried out subtracting the coordinates of the centroid to each of the landmarks. The second operation proceeds by multiplying all the landmarks by a suitable scalar to carry the shape to the confines of the unit interval, square or box. It is not necessary to rotate the shapes within the same $\kappa$-family to compare them because all of them share the same orientation. 

The theory explains how shapes can be hierarchically integrated in increasingly complex structures (scenes) so that shape is \emph{transferred} to more encompassing entities. The time unfolding that characterizes the shape is entirely transferred. The theory also explains how structural information can be retrieved at any level (the $\epsilon$-shapes or $\epsilon$-scenes, that contain this information, are included as particular cases within the $\kappa$-families derived from them). In this sense the theory satisfies Leyton's criteria for a succesful theory of shape \cite{Leyton}. Furthermore, within a same $\kappa$-family, shapes with larger $\kappa$ values are generally more symmetrical that shapes with lower $\kappa$, the $\epsilon$-shape being the least symmetrical shape within a $\kappa$-family. This allows to describe within our theory Leyton's processes in which symmetry-breaking serves as memory storage. These processes are found in Leyton's application of its theory of shape to painting \cite{Leypain} and architecture \cite{Leyarch}. Although a previous version of Leyton's theory \cite{Leymind} has been strongly criticized on mathematical grounds and vagueness of the ideas \cite{Wagemans} that critique did not incorporate the wreath product construction in \cite{Leyton}, which in, our view, constitutes a sound mathematical basis for Leyton's theory of shape. Our theory of shape presented in this manuscript yields independent support to Leyton's main basic idea that perception is nothing but the recovery of causal history \cite{Leyton}. Indeed if the parameter $\kappa$ in our theory is viewed as a time variable, all shapes within a $\kappa$-family are causally related, as they are perceptually, to the $\epsilon$-shape. 

We can mention some interesting directions for further research. First of all, we have described $\kappa$-families of shapes in Euclidean spaces but we can also be interested in constraining the shapes so as to lie on a manifold \cite{Jupp, Kim}. Another interesting direction is investigating the relationship between our theory and Leyton's wreath product in \cite{Leyton} and the relationships of $\kappa$-families with group actions, in general. Finally, a very challenging but interesting problem, is to investigate the connection of $\kappa$-families to continuous walks through Kendall shape spaces \cite{Kendall1, Kendall2, Klingenberg}.
~\\
~\\

\section*{Acknowledgments}

We have benefitted from fruitful conversations with Prof. Jos\'e A. Manzanares. Financial support under project No. PGC2018-097359-B-I00 from \emph{Ministerio de Ciencia, Innovaci\'on y Universidades} (Spain) and the European Regional Development Funds (FEDER) is also gratefully acknowledged.

\end{document}